\title{Directed Acyclic Outerplanar Graphs Have Constant Stack Number}
\crefname{sidefigure}{Figure}{Figures}
\crefname{cfigure}{Figure}{Figures}
\crefname{csidefigure}{Figure}{Figures}
\crefname{conjecture}{Conjecture}{Conjectures}
\crefname{open}{Open Problem}{Open Problems}
\crefname{claim}{Claim}{Claims}
\DeclareMathOperator{\sn}{sn}
\DeclareMathOperator{\tn}{tn}
\newcommand{\calP}{\ensuremath{\mathcal{P}}\xspace}
\newcommand{\calB}{\ensuremath{\mathcal{B}}\xspace}
\newcommand{\invref}[1]{\labelcref{#1}\xspace}
\newcommand{\propref}[1]{\labelcref{#1}\xspace}
\newcommand{\lab}[1]{\texttt{#1}\xspace}
\newcommand{\Hpartition}{$H$\nobreakdash-partition\xspace}
\colorlet{lipicsGray}{@ThCSlightred}
\newcommand\IfRestateTF{%
  \ifx\label\thmt@gobble@label 
    \expandafter\@firstoftwo
  \else
    \expandafter\@secondoftwo
  \fi
}
\newcommand{\RestateRemark}{\IfRestateTF{{\normalfont\bfseries (Restated) }}{}}
\begin{document}

\maketitle

\begin{abstract}
    The stack number of a directed acyclic graph~$G$ is the minimum~$k$ for which there is a topological ordering of~$G$ and a $k$-coloring of the edges such that no two edges of the same color cross, i.e., have alternating endpoints along the topological ordering.
    We prove that the stack number of directed acyclic outerplanar graphs
    is bounded by a constant, which gives a positive answer to a conjecture by Heath, Pemmaraju and Trenk~[SIAM J.\ Computing,~$1999$].
    As an immediate consequence, this shows that all upward outerplanar graphs have constant stack number, answering a question by Bhore et al.~[Eur.\,J.\,Comb.,~2023] and thereby making significant progress towards the problem for general upward planar graphs originating from Nowakowski and Parker [Order, 1989]. 
    As our main tool we develop the novel technique of directed $H$-partitions, which might be of independent interest.

    We complement the bounded stack number for directed acyclic outerplanar graphs by constructing a family of directed acyclic $2$-trees that have unbounded stack number, 
    thereby refuting a conjecture by Nöllenburg and Pupyrev [GD 2023].
\end{abstract}


\section{Introduction}

A directed acyclic graph, shortly DAG, is a directed graph with no directed cycles.
For an integer $k \geq 1$ and a directed acyclic graph~$G$, a \emph{$k$-stack layout} of $G$ consists of a topological ordering~$\prec$ of the vertices~$V(G)$ and a partition of the edges~$E(G)$ into~$k$ parts such that each part is a \emph{stack}.
A part is a stack if no two of its edges cross with respect to~$\prec$, where two edges~$ab$ and~$cd$ \emph{cross} if their endpoints are ordered $a \prec c \prec b \prec d$ or $c \prec a \prec d \prec b$.
Now the \emph{stack number}\footnote{
    The stack number is also known as \emph{book thickness} or \emph{page number}, especially in the older literature (stacks are called pages then).
    In recent years the term stack number seems to be preferred over the others as it explicitly expresses the first-in-last-out property of every part, in alignment with related concepts like queue layouts.
} $\sn(G)$ of~$G$ is the minimum~$k$ such that there exists a $k$-stack layout of~$G$.
It is convenient to interpret the ordering $\prec$ as the vertices of $G$ being laid out from left to right, i.e., for $a \prec b$ we say that $a$ is to the left of $b$ (and $b$ is to the right of $a$) or that $a$ comes before $b$ in $\prec$ (and $b$ comes after $a$ in $\prec$).
Then $\prec$ being a topological ordering of $G$ means that for every edge $ab$ directed from $a$ to $b$, vertex $a$ must come before vertex $b$ in $\prec$, in other words, every edge is directed from its left to its right endpoint according to $\prec$.

Given a topological ordering~$\prec$ of $G$, a partition of $E(G)$ into $k$ stacks can equivalently be seen as a $k$-edge coloring, such that each color class is crossing-free.
The simplest obstruction to admitting a partition into $k-1$ stacks is a set of $k$ pairwise crossing edges, also called a \emph{$k$-twist}.
For example, consider the $2k$-vertex graph $G_k$, $k \geq 2$, in the left of \cref{fig:planar-partial-3-tree} consisting of the directed path $P = (\ell_1,\ldots,\ell_k,r_1,\ldots,r_k)$ and the matching $M = \{\ell_i r_i \mid i=1,\ldots,k\}$.
This directed acyclic graph has only one topological ordering $\prec$ in which the vertices are ordered along the directed path $P$.
However with respect to this ordering $\prec$ the edges in $M$ form a $k$-twist.
It follows that $\sn(G_k) \geq k$ for all $k \geq 2$, which is also tight, as we can easily find a partition of $E(G_k)$ into $k$ stacks as indicated in the right of \cref{fig:planar-partial-3-tree}.

\begin{sidefigure}
    \centering
        \includegraphics[scale=1.30909090909, page=2]{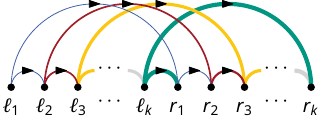}
    \hspace{2em}
    \includegraphics[scale=1.30909090909, page=1]{figures/planar-partial-3-tree.pdf}
    \caption{A planar directed acyclic graph $G_k$ of treewidth~$3$ on $2k$ vertices (left) and its unique topological ordering $\prec$ containing a $k$-twist (right).}
    \label{fig:planar-partial-3-tree}
\end{sidefigure}

Evidently, for every $k$ the graph $G_k$ is planar.
Moreover, $G_k$ is $2$-degenerate and has treewidth\footnote{We do not need the formal definition of treewidth here but we define related relevant concepts like $2$-trees in \cref{sec:preliminaries}.} at most~$3$.
So this family of DAGs shows that the stack number is not bounded within the class of all planar directed acyclic graphs; not even within those that are $2$\nobreakdash-degenerate and have treewidth at most~$3$ (even pathwidth at most~$3$).
On the other hand, as already noted by Nowakowski and Parker~\cite{Nowakowski1989_DirectedStackNumer} as well as Heath, Pemmaraju and Trenk~\cite{Heath1999_DAGs1}, it is easy to verify that if $G$ is a directed forest (equivalently, if $G$ is $1$-degenerate, or if $G$ has treewidth $1$), then $\sn(G) = 1$.
However, determining the largest stack number among the class of DAGs of treewidth $2$, in particular in the special case of outerplanar graphs, has remained an intriguing open problem for several decades~\cite{Heath1999_DAGs1,DiGiacomo2006_SeriesParallel,Noellenburg2021_DAGsWithConstantStackNumber,Bekos2019_Planar_kPlanar,Bhore2021_UpwardBookThickness}.
In fact there was little progress even for the considerably smaller class of outerplanar DAGs, for which a well-known conjecture of Heath, Pemmaraju and Trenk~\cite{Heath1999_DAGs1} from $1999$ states that the stack number should be bounded.

\begin{conjecture}[Heath, Pemmaraju and Trenk, 1999~\cite{Heath1999_DAGs1}]\label{conj:outerplanar}
 The stack number of the class of directed acyclic outerplanar graphs is bounded above by a constant.
\end{conjecture}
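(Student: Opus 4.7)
The plan is to develop the notion of a \emph{directed $H$-partition} promised in the abstract, use it to split a directed acyclic outerplanar graph $G$ into a simple ``outer'' DAG $H$ whose vertices are bags of $V(G)$, and then assemble a constant-stack layout of $G$ from stack layouts of $H$ and of the individual bags. Concretely, a directed $H$-partition of $G$ should consist of a DAG $H$, a partition $\{V_x : x \in V(H)\}$ of $V(G)$, and a linear order $\prec_x$ on each $V_x$, such that (i) every arc of $G$ with endpoints in two distinct bags $V_x$ and $V_y$ comes from an arc $xy \in E(H)$, and (ii) concatenating the intra-bag orders along any topological order of $H$ yields a topological order of $G$.

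The first main step is a \emph{structural lemma} asserting that every directed acyclic outerplanar graph $G$ admits a directed $H$-partition in which $H$ is very simple (say, a directed tree), each bag induces an essentially monotone subgraph such as a directed path, and inter-bag edges are ``local'' with respect to the outer face of~$G$. The construction should exploit the weak dual tree of~$G$: one walks along the outer face and groups maximal segments of consecutive vertices that share a coherent orientation pattern into bags, so that every remaining edge either stays inside a bag (completing its path structure) or jumps between two bags adjacent in~$H$. The second main step is a \emph{layout lemma} that turns such an $H$-partition into a constant-stack layout: take a topological order of $H$ (which, since $H$ is a tree, needs only one stack by the forest result of Heath--Pemmaraju--Trenk), concatenate the bag orders along it, use one stack for the directed path inside each bag, and a bounded number of additional stacks for inter-bag edges, scheduled by a standard interval-type coloring that uses the locality property.

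The main obstacle will be the structural lemma. The graph $G_k$ of \cref{fig:planar-partial-3-tree} shows that already for planar DAGs of treewidth~$3$ the topological ordering can force arbitrarily large twists, so the entire argument hinges on outerplanarity preventing this phenomenon. Turning this intuition into an induction is delicate because the weak dual tree of $G$ can be arbitrary, the orientation pattern of chords along a single face can vary wildly, and a face can be incident to many subtrees of the weak dual that each impose their own topological constraints. The crux will be to choose a root of the weak dual and a bag-formation rule so that each face contributes only a constant number of bags, and so that every chord either is absorbed inside a bag or becomes an edge of~$H$. Once the structural lemma is in place, the layout lemma should follow by a direct stack-by-stack accounting, yielding the desired absolute constant bound on $\sn(G)$ and thus establishing \cref{conj:outerplanar}.
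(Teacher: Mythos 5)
Your high-level plan---contract along a directed $H$-partition, lay out the quotient $H$, then expand the bags---matches the skeleton of the paper's proof, but the structural lemma you are hoping for does not exist in the form you describe, and this is a genuine gap rather than a technicality. You want $H$ to be a directed tree and each bag to induce (essentially) a directed path. The obstruction is the class of \emph{monotone} outerplanar DAGs, i.e., those in which every stacked vertex is a left or right child and no vertex is transitive. In such a graph there is no path-like substructure to absorb into bags: any partition into bags inducing directed paths is essentially the partition into singletons, so the quotient is $G$ itself, which is $2$-connected and far from a tree; conversely, forcing $H$ to be a tree makes the bags contain large $2$-connected monotone pieces whose internal stack number is exactly what you were trying to bound in the first place. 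The paper resolves this by \emph{not} making $H$ a tree: it takes the bags to be the transitive subgraphs below monotone vertices (which do admit $1$-stack layouts), accepts that $H$ is then a block-monotone outerplanar DAG, and imports two nontrivial external results to handle $H$: the theorem of N\"ollenburg and Pupyrev~\cite{Noellenburg2021_DAGsWithConstantStackNumber} that monotone outerplanar DAGs have twist number at most $4$, and Davies' theorem~\cite{Davies2022_ColoringCircleGraphs} that bounded twist number implies bounded stack number. Your proposal contains no substitute for this monotone case, which is where the real difficulty of \cref{conj:outerplanar} lives; the example of \cref{fig:3-fence-example} already shows that some non-tree-like complexity must survive in $H$ or in the inter-bag edges.

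A second, smaller gap is in your layout lemma. Even when all edges between two bags are oriented the same way and the bags occupy disjoint intervals of the vertex ordering, the inter-bag edges corresponding to a single edge of $H$ can pairwise cross and form an arbitrarily large twist; ``locality'' along the outer face does not by itself yield an interval-type coloring with constantly many stacks. The paper controls this with the \emph{cut cover number}: each bag is engineered so that all edges leaving it are covered by at most four vertices (via the two directed paths $Q_1,Q_2$ bounding the transitive subgraph), so these edges decompose into constantly many stars of $G$ and hence into constantly many stacks. You would need to build an analogous quantitative covering property into your bag-formation rule for the stack-by-stack accounting to go through.
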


For the sake of an example, let us argue that the outerplanar DAG $G$ in \cref{fig:3-fence-example} has $\sn(G) \geq 3$.
In fact, due to symmetry it is enough to consider a topological ordering $\prec$ with $a_3$ to the left of $b_1$ and to observe that in $\prec$ the edges $a_1b_1$, $a_2b_2$, $a_3b_3$ form a $3$-twist.
\cref{conj:outerplanar} concerning outerplanar DAGs, but also the more general question about directed acyclic $2$-trees, have received increasing attention over the last years
(see \cref{sec:related_work} for more details), but it remained open to this day whether either of these classes contains DAGs of arbitrarily large stack number.

\begin{sidefigure}
    \centering
    \includegraphics[scale=1.30909090909, page=2]{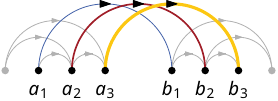}
    \hspace{2em}
    \includegraphics[scale=1.30909090909, page=1]{figures/3-fence-example}
    \caption{An outerplanar directed acyclic graph on eight vertices (left) and a topological ordering $\prec$ with $a_3$ to the left of $b_1$ containing a $3$-twist (right).}
    \label{fig:3-fence-example}
\end{sidefigure}

In addition to \cref{conj:outerplanar}, there is a second major open question in the field of directed linear layouts which we attack.
Here, a DAG is called \emph{upward planar} if it can be drawn in the plane such that the edges are crossing-free and $y$-monotone.

\begin{open}[Nowakowski, Parker, 1989~\cite{Nowakowski1989_DirectedStackNumer}]\label{prob:upward_sn_bounded}
    Is the stack number of the class of upward planar graphs bounded above by a constant?
\end{open}

\subparagraph{Our results.}
In this paper, we answer both long-standing open problems connected to \cref{conj:outerplanar}, that is whether or not outerplanar graphs and 2-trees have bounded stack number. 
We answer the first question in the positive, the second in the negative.
Additionally, we attack \cref{prob:upward_sn_bounded} and contribute a significant class of upward planar graphs with bounded stack number.

In \cref{sec:outerplanar}, we prove that \cref{conj:outerplanar} is true by showing that outerplanar DAGs have stack number at most $24776$.
As one of our main tools we introduce \emph{directed \Hpartition{}s}, which may be of independent interest for investigations of directed graphs in general.
We remark that while another variant of $ H $-partitions, so called \emph{layered $ H $-partitions}, caused a breakthrough in the investigation of queue layouts (a notion closely related to stack layouts)~\cite{Dujmovic2020_PlanarGraphsBoundedQueueNumber}, this is -- to the best of our knowledge -- the first time that $ H $-partitions are successfully adjusted to work with stack layouts.

\begin{restatable}{theorem}{outerplanarBounded}
    \label{thm:outerplanar_bounded}\RestateRemark
    The stack number of outerplanar DAGs is bounded by a constant.
    Moreover, every outerplanar DAG~$G$ has $\sn(G) \leq 24776$.
\end{restatable}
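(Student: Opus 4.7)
The plan is to prove \cref{thm:outerplanar_bounded} via the novel tool of directed \Hpartition{}s. First, I would establish a structural lemma of the form: every outerplanar DAG~$G$ admits a directed \Hpartition in which $H$ is a tree, every bag $V_x$ induces a directed path of~$G$, and the inter-bag edges have a tightly controlled local structure (for instance, an edge between distinct bags may only go to a parent bag in $H$, or connects bags that are at bounded distance in $H$). The natural source of such a decomposition is the weak dual of an outerplanar embedding of~$G$, refined by greedily grouping consecutive outer-face vertices that form directed paths in~$G$ into the same bag, and splitting wherever the orientation along the outer face reverses.

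Given such a decomposition, the construction of the stack layout proceeds in two steps. The first is to build a topological ordering~$\prec$ of~$G$: root $H$ at an appropriate node (e.g., one containing a source of~$G$), traverse $H$ depth-first in a way consistent with the DAG structure of the quotient, and within each bag order the vertices along the directed path. Verifying that~$\prec$ is a valid topological ordering amounts to checking that the path order inside bags and the DFS order between bags together respect all arcs of~$G$. The second step is to partition the edges into constantly many stacks: intra-bag edges form a directed path plus a bounded number of outerplanar chords within one bag and thus have constant stack number; inter-bag edges are classified by how they move through the rooted tree~$H$ (upward to an ancestor versus between subtrees, possibly by parity of the depth traversed), and each class can be accommodated by a constant number of stacks via standard tree-routing arguments. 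Summing all contributions yields the explicit bound $24776$.

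The main obstacle, and where I expect the genuine technical difficulty to lie, is the existence of such a well-behaved directed \Hpartition. In the undirected setting the weak dual alone suffices because one has full freedom in choosing the vertex ordering; here, however, the DAG orientation may clash with the natural orientation inherited from the outerplanar embedding, as the outer face need not be consistently oriented. This forces a careful case analysis on the orientations of outer-face edges and chords, together with a clever choice of where to break bags and where to root $H$, so that the resulting bag order can be extended to a topological ordering. A secondary but more routine challenge is showing that the DFS-guided bag order is simultaneously a topological ordering of the quotient \emph{and} allows the inter-bag edges to be stack-colored with a constant number of colors; I expect this to yield to a heavy-path or layering decomposition of $H$, with each layer contributing an additive constant that accumulates into the final bound.
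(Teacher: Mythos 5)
Your proposal correctly identifies directed \Hpartition{}s as the organizing tool, but the concrete plan diverges from the paper's and has genuine gaps at exactly the points where the real difficulty sits. First, the decomposition you propose (bags of consecutive outer-face vertices forming directed paths, quotient a tree derived from the weak dual) is not shown to be a directed \Hpartition at all: you need every edge between two fixed parts to be oriented consistently from one part to the other, and you need the quotient to be acyclic --- neither follows from grouping consecutive outer-face vertices into directed paths (the paper even notes that contracting connected subgraphs of a DAG can create directed cycles, e.g.\ a $6$-cycle with alternating orientations). The paper instead fixes a base edge, takes the construction tree of the $2$-tree, and lets each part be the \emph{transitive subgraph below a monotone vertex}; consistency of orientations and acyclicity of the quotient are then proved from monotonicity, and the quotient $H$ is not a tree but a \emph{block-monotone outerplanar DAG}.

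Second, and more fundamentally, "standard tree-routing arguments" do not bound the stacks needed for inter-bag edges. A single edge of $H$ can correspond to many edges of $G$ between the two parts, and if these attach at many distinct vertices of each part they can form an arbitrarily large twist once the parts are linearized (this is essentially the mechanism behind the unbounded example $G_k$ in the introduction). The paper controls this with the \emph{cut cover number}: it proves that within each block of $H$ all edges leaving a part can be covered by at most $4$ vertices, so they decompose into $4$ stars and hence $4$ stacks per star forest. Your proposal has no substitute for this. Third, the constant does not come from summing tree layers: it comes from the external ingredients you omit entirely, namely the N\"ollenburg--Pupyrev bound $\tn \leq 4$ for monotone outerplanar DAGs combined with Davies' $\chi$-boundedness of circle graphs (giving $128$ for monotone, $258$ for block-monotone $H$), fed into two composition lemmas to yield $4 \cdot (3\cdot 4\cdot 258 + 1)\cdot 2 \cdot 1 = 24776$. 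Without an argument replacing that twist-number bound, your intra-bag and quotient layouts are not justified either.
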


Our second main result complements the constant upper bound for outerplanar DAGs by showing that already a just slightly larger subclass of graphs of treewidth~$2$ has unbounded stack number.%
\footnote{%
    We remark that every 2-tree can be constructed by iteratively \emph{stacking} vertices onto an edge, i.e., by introducing a new vertex that is connected exactly to the endpoints of an edge.
    A directed 2-tree is called \emph{monotone} if in each step the new vertex is a source or a sink.
    See also \cref{sec:preliminaries}.
}

\begin{restatable}{theorem}{twotreesUnbounded}
    \label{thm:2tree_unbounded}\RestateRemark
    The stack number of DAGs of treewidth~$2$ is unbounded.
    Moreover, for every~$k \geq 1$ there exists a monotone $2$-tree~$G$ with~$\sn(G) \geq k$ in which at most two vertices are stacked onto each edge.
\end{restatable}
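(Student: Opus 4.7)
The first reduction is to the twist number: any two crossing edges must lie on distinct stacks, so the edges of a $k$-twist occupy $k$ different stacks, giving $\sn(G) \geq \tn(G)$ for every DAG $G$. Hence it suffices to construct, for each $k \geq 1$, a monotone $2$-tree $G_k$ with $\tn(G_k) \geq k$ in which at most two vertices are stacked onto each edge.

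My plan is to build such $G_k$ by induction on $k$, arranging that each $G_k$ has a unique source $s_k$ and a unique sink $t_k$ joined by the edge $s_k t_k$, so that $s_k$ comes first and $t_k$ comes last in every topological ordering. The base $G_1$ is simply this edge. To build $G_{k+1}$, I would take $N = N(k)$ copies $H_1,\dots,H_N$ of $G_k$, each with its own source $s^{(i)}$ and sink $t^{(i)}$, and glue them together along an \emph{outer frame}: a monotone $2$-tree from a new source $s$ to a new sink $t$ formed by a long chain of triangles in which each edge hosts at most two stacked vertices. Each copy $H_i$ is attached by identifying $s^{(i)}$ and $t^{(i)}$ with the (at most) two vertices stacked onto one dedicated edge of the frame, while inside $H_i$ the stacking bound already holds by induction.

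The twist argument proceeds by pigeonhole/Ramsey reasoning. In any topological ordering $\prec$ of $G_{k+1}$, the outer frame pins the sequence of the pairs $(s^{(i)}, t^{(i)})$ into a rigid pattern along $\prec$. Choosing $N$ large enough relative to $k$, I would find either a copy $H_i$ whose inductive $k$-twist is fully crossed by a single extra edge of the frame, or two copies $H_i, H_j$ whose sources and sinks interleave in such a way that combining a single edge of one with the inner $k$-twist of the other produces a $(k+1)$-twist.

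The hard part will be engineering the outer frame so that three requirements hold simultaneously: (i) it is a monotone $2$-tree with at most two stacks per edge, (ii) it can attach $N$ copies of $G_k$ on independent dedicated edges without exceeding the stacking bound, and (iii) the ordering of the sources and sinks of the copies is forced to be rigid enough that the promised extra crossing is unavoidable. The conflict between the hard bound in (ii), which prevents concentrating many copies near a single edge, and the rigidity needed in (iii) is what drives $N$ to grow in Ramsey-type fashion with $k$ and will dictate the bulk of the construction and case analysis.
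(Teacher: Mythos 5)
Your reduction to the twist number is correct and matches the paper, and the broad flavor of your plan (a recursive construction of depth $k$ plus Ramsey/pigeonhole arguments in an arbitrary topological ordering) is also the right one. However, there are two genuine gaps. First, the attachment mechanism does not yield a $2$-tree: if $s^{(i)}$ and $t^{(i)}$ are the two vertices stacked onto a frame edge $uv$, then the edge $s^{(i)}t^{(i)}$ --- which you need as the base edge of the copy $H_i$ --- is not present, and adding it turns $\{u,v,s^{(i)},t^{(i)}\}$ into a $K_4$, destroying treewidth~$2$. Relatedly, the hoped-for rigidity of the frame is not automatic: in a monotone $2$-tree the left children of an edge may appear in an arbitrary order to the left of their parents, so the sequence of pairs $(s^{(i)},t^{(i)})$ is \emph{not} pinned by the frame; indeed, by \cref{thm:monotone_outerplanar_twist_4} any frame with at most one vertex stacked per edge has twist number at most~$4$, so all the unboundedness must be extracted from how the \emph{two} children per edge interleave, which your sketch does not engineer.

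Second, and more fundamentally, your inductive invariant is too weak. Knowing only that every topological ordering of $G_k$ contains a $k$-twist \emph{somewhere} gives no handle for finding one additional edge that crosses all $k$ of its edges: the twist could sit anywhere inside the span of $H_i$, and no single frame edge is guaranteed to cross it. The paper's proof resolves exactly this by carrying a much stronger invariant through the induction: an $(s-1)$-twist $X_s$ \emph{together with} a large rainbow $R_s$, contained in a designated matching of fresh edges, such that every edge of $X_s$ crosses every edge of $R_s$. The rainbow is the lever --- via \cref{lem:rainbow_with_children} (an Erd\H{o}s--Szekeres argument) one either finds a $k$-twist outright, or finds one new edge crossing a large sub-rainbow (extending the twist), or finds a large sub-rainbow whose gadgets are mutually non-crossing, in which case one descends into a single gadget and restarts the rainbow there. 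Your dichotomy ``either a copy's twist is crossed by a frame edge, or two copies interleave to give a $(k{+}1)$-twist'' is asserted but not justified, and without an explicitly maintained rainbow-type structure it is unclear how it could be. To repair the proposal you would need to strengthen the induction hypothesis to track a crossed rainbow of controlled size (shrinking by a factor depending on $r_{s+1}$ at each level, which dictates the tower-type growth of $N$), which is essentially the paper's argument.
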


We remark that \cref{thm:outerplanar_bounded,thm:2tree_unbounded} together give a quite good understanding of which 2-trees have bounded stack number as stacking at most one vertex onto each edge yields exactly the maximal outerplanar DAGs.
In~$2006$, Di Giacomo, Didimo, Liotta and Wismath~\cite{DiGiacomo2006_SeriesParallel} asked whether or not all DAGs with treewidth 2 admit a $2$-stack layout.
(The DAG in \cref{fig:3-fence-example} is already a counterexample.)
<Most recently, N\"ollenburg and Pupyrev~\cite{Noellenburg2021_DAGsWithConstantStackNumber} highlight whether or not all directed acyclic $2$-trees have bounded stack number as an important open question.
They as well as a Dagstuhl report by Bekos et al.~\cite{Bekos2019_Planar_kPlanar} even conjecture that the stack number of all such $2$-trees should indeed be bounded.
Our \cref{thm:2tree_unbounded} refutes this conjecture.

Let us also emphasize that \cref{thm:outerplanar_bounded} in particular gives that upward outerplanar graphs have bounded stack number.
This was known only for specific subclasses before, such as internally triangulated upward outerpaths~\cite{Bhore2021_UpwardBookThickness}.
As such, \cref{thm:outerplanar_bounded} provides one of the largest known classes of upward planar graphs with bounded stack number, while it is a famous open problem whether or not actually all upward planar graphs have bounded stack number~\cite{Nowakowski1989_DirectedStackNumer}.

\subparagraph{Organization of the paper.}
Before proving \cref{thm:outerplanar_bounded} in \cref{sec:outerplanar} and \cref{thm:2tree_unbounded} in \cref{sec:2trees}, we define in \cref{sec:preliminaries} all concepts and notions relevant for our proofs.
This includes a quick reminder of $2$-trees, outerplanar graphs, stack layouts, and twist numbers, but also some specialized notions for directed acyclic $2$-trees, such as monotone and transitive vertices, block-monotone DAGs, or transitive subgraphs below monotone vertices.
Directed $H$-partitions are introduced in \cref{sec:directed-H-partitions}.
But first, let us review related work.

\subsection{Related Work}
\label{sec:related_work}

Stack layouts are just one type of so-called \emph{linear layouts} which have been an active field of study over at least the last forty years.
In full generality, a linear layout of an (undirected) graph~$G$ consists of a total ordering~$\prec$ of the vertices~$V(G)$ and a partition of the edges~$E(G)$ into parts such that each part fulfills certain combinatorial properties.
For directed acyclic graphs the vertex ordering~$\prec$ must be a topological ordering.
The most prominent types of linear layouts are \emph{stack layouts} (no two edges of the same part cross) and \emph{queue layouts} (no two edges of the same part nest, i.e., two edges~$ab$ and~$cd$ ordered $a \prec c \prec d \prec b$ are forbidden)~\cite{Heath1992_Queues,Heath1992_Comparing,Dujmovic2020_PlanarGraphsBoundedQueueNumber}.
These are accompanied by a rich field of further variants like \emph{mixed layouts} (a combination of stack and queue layouts)~\cite{Heath1992_Queues,Pupyrev2017_MixedLayoutPlanar,Angelini2022_MixedLayouts2Trees,Alam2022_MixedPageNumber,deCol2019_MixedLinearLayours,Foerster2023_BipartiteLinearLayouts}, \emph{track layouts}~\cite{Dujmovic2005_TrackLayouts,Bannister2019_TrackLayouts,Pupyrev2020_TrackNumber}, deque layouts~\cite{Auer2011_QueueDequeGraphs,Bekos2023_ComputeLinearLayouts,Bekos2023_DequeRiqueComplete}, rique layouts~\cite{Bekos2023_RiqueNumber,Bekos2023_ComputeLinearLayouts,Bekos2023_DequeRiqueComplete}, \emph{local} variants thereof~\cite{Merker2019_LocalStack,Merker2020_LocalQueue,Merker2021_LocalComplete,Angelini2022_MixedLayouts2Trees} and more~\cite{Dujmovic2004_LinearLayouts,Alam2021_DispersableBookEmbeddings}.

Let us content ourselves with just briefly summarizing below (some of) those previous results on stack layouts that concern undirected or directed planar graphs.

\subparagraph{Stack layouts of undirected planar graphs.}
Building on the notion of Kainen and Ollmann~\cite{Kainen1974_BookThickness,Ollmann1973_BookThickness}, the stack number for undirected graphs was first investigated by Bernhart and Kainen~\cite{Bernhart1979_BookThickness} in 1979.
Besides giving bounds for complete and complete bipartite graphs, they conjecture that there are planar graphs with arbitrarily large stack number.
This conjecture was refuted in~\cite{Buss1984_StacksPlanar,Heath1984_7StacksPlanar} leading eventually to an improved upper bound of~$4$~\cite{Yannakakis1989_4StacksPlanar}, which has only recently been shown to be tight~\cite{Bekos2020_4StacksPlanar,Yannakakis2020_4StacksPlanar}.

It is well-known that a graph with at least one edge admits a $1$-stack layout if and only if it is outerplanar, and that it admits a $2$-stack layout if and only if it is planar sub-Hamiltonian (i.e., is a subgraph of a planar graph containing a Hamiltonian cycle)~\cite{Bernhart1979_BookThickness}.
Testing whether a graph is planar sub-Hamiltonian is \NP-complete~\cite{Wigderson1982_HamiltonianCompletion}, but there has been significant effort to identify planar Hamiltonian and sub-Hamiltonian graph classes:
These include (among others) planar bipartite graphs~\cite{DeFraysseix1995_PlanarBipartite}, planar graphs with maximum degree at most~$4$~\cite{Bekos2016_Deg4Hamiltonian}, planar $4$-connected graphs~\cite{Tutte1956_4ConnectedHamiltonian}, planar $3$-connected graphs with maximum degree~$5$, and $2$-trees~\cite{Rengarajan1995_Stacks2Trees}.
Further, three stacks are sufficient for planar $3$-trees~\cite{Heath1984_7StacksPlanar} and planar graphs with maximum degree~$5$~\cite{Guan2020_Deg5Planar}.
Recall that four stacks are always sufficient~\cite{Yannakakis1989_4StacksPlanar} and sometimes necessary~\cite{Bekos2020_4StacksPlanar,Yannakakis2020_4StacksPlanar} for all planar graphs.

Among the numerous results on non-planar graphs $G$, there are bounds on $\sn(G)$ depending on the Euler genus~\cite{Malitz1994_EulerGenus}, the pathwidth~\cite{Togasaki2002_Pathwidth} and the treewidth~\cite{Ganley2001_Treewidth,Dujmovic2007_treewidth,Vandenbussche2009_kTrees} of $G$.

\subparagraph{Stack layouts of planar directed acyclic graphs.}
For directed acyclic graphs we additionally require the vertex ordering~$\prec$ to be a topological ordering.
Nowakowski and Parker~\cite{Nowakowski1989_DirectedStackNumer} were the first to study this and consider stack layouts of diagrams of posets.
Presenting an example with stack number~$3$ (which was later improved to~$4$ by Hung~\cite{Hung1993_Poset4Stacks} and to~$5$ by Merker~\cite{Merker2020_Thesis}), they ask whether posets with a planar diagram\footnote{In planar diagrams (generally: upward planar drawings) all edges must be drawn $y$-monotone along their edge direction. For example, the graph in \cref{fig:3-fence-example} is upward planar, while the graph $G_k$ in \cref{fig:planar-partial-3-tree} admits no upward planar drawing due to the edge from vertex $\ell_k$ to vertex $r_1$.} have bounded stack number.
Despite significant effort on different subclasses~\cite{Syslo1990_Posets,Alzohairi1997_SeriesParallelPosets,Heath1997_Posets,Heath1999_DAGs2,Heath1999_DAGs1,Alzohairi2001_Lattices,Alhashem2015_StacksPoset}, this question still remains open.

A slight generalization, namely whether all upward planar graphs have bounded stack number, is considered to be one of the most important open questions in the field of linear layouts~\cite{Frati2013_UpwardStackNumber,Noellenburg2021_DAGsWithConstantStackNumber,Jungeblut2022_SublinearUpperBound}.
It is known to hold for upward planar $3$-trees~\cite{Frati2013_UpwardStackNumber,Noellenburg2021_DAGsWithConstantStackNumber}.
However, this does not imply the same for upward planar DAGs of treewidth at most $3$, since upward planar partial $3$-trees might not have an upward planar $3$-tree as a supergraph.
(E.g., \cref{fig:3-fence-example} depicts such an example.)
Superseding previous results~\cite{Frati2013_UpwardStackNumber}, Jungeblut, Merker and Ueckerdt~\cite{Jungeblut2022_SublinearUpperBound} recently gave the first sublinear upper bound for all upward planar graphs by showing that every $n$-vertex upward planar graph $G$ has stack number $\sn(G) \leq O((n \log n)^{2/3}) = o(n)$.

For general planar DAGs (that are not necessarily upward planar), Heath, Pemmaraju and Trenk~\cite{Heath1999_DAGs1} show that directed trees admit $1$-stack layouts and directed unicyclic graphs admit $2$-stack layouts.
Other classes of DAGs admitting $2$-stack layouts include two-terminal series-parallel graphs~\cite{DiGiacomo2006_SeriesParallel}, $N$-free graphs~\cite{Mchedlidze2009_HamiltonianCompletion} or planar graphs whose faces have a special structure~\cite{Binucci2019_UpwardStackST}.
Recall from the example in \cref{fig:planar-partial-3-tree} that the stack number can be linear in the number of vertices, already for planar DAGs of treewidth~$3$.
With this in mind, Heath, Pemmaraju and Trenk~\cite{Heath1999_DAGs1} formulated \cref{conj:outerplanar} in~$1999$.

\cref{conj:outerplanar} has received considerable attention, especially in recent years.
Bhore, Da Lozzo, Montecchiani and N\"{o}llenburg~\cite{Bhore2021_UpwardBookThickness} confirm \cref{conj:outerplanar} for several subclasses of outerplanar DAGs, including internally triangulated upward outerpaths or cacti.
Subsequently, N\"{o}llenburg and Pupyrev~\cite{Noellenburg2021_DAGsWithConstantStackNumber} confirm \cref{conj:outerplanar} for single-source outerplanar DAGs, monotone outerplanar DAGs (to be defined in \cref{sec:preliminaries}) and general outerpath DAGs.  
In a Dagstuhl Report~\cite{Bekos2019_Planar_kPlanar}, Bekos et al.\ claim that every $n$-vertex outerplanar DAG $G$ has $\sn(G) \leq O(\log n)$, while they conjecture that actually $\sn(G) = O(1)$, even for all directed acyclic $2$-trees $G$.
\cref{thm:outerplanar_bounded} in the present paper confirms \cref{conj:outerplanar}, while \cref{thm:2tree_unbounded} refutes the conjecture of Bekos et~al.

Finally, let us mention that the decision problem of whether a given DAG admits a $k$-stack layout is known to be \NP-complete for every~$k \geq 2$~\cite{Bekos2022_DAGs2StacksNP,Binucci2019_UpwardStackST,Heath1999_DAGs2}.
However, there are FPT algorithms parameterized by the branchwidth~\cite{Binucci2019_UpwardStackST} or the vertex cover number~\cite{Bhore2021_UpwardBookThickness}.

\section{Preliminaries}
\label{sec:preliminaries}

All graphs considered here are finite, non-empty, and simple, i.e., have neither loops nor parallel edges.
For a graph $ G $ and an edge $ e $, we define $ G + e $, respectively $ G - e $, as the graph obtained from $ G $ by adding, respectively removing, the edge $ e $ if possible.
Similarly, vertices, sets of edges or vertices, and subgraphs can be added or removed with the same notation, where incident edges are also removed as necessary.

\subparagraph{Outerplanar graphs and 2-trees.}
We start by considering undirected graphs.
A graph~$G$ is \emph{outerplanar} if it admits a plane drawing with all vertices incident to the outer face.
Further, $G$ is \emph{maximal outerplanar} if no edge~$e$ can be added to~$G$ such that~$G + e$ remains outerplanar.
Equivalently, a maximal outerplanar graph is either a single vertex, a single edge, or consists of $n \geq 3$ vertices and admits a plane drawing whose outer face is bounded by a cycle of length $n$ and where every inner face is bounded by a triangle.

Outerplanar graphs are intimately related to graphs of treewidth~$2$ and so-called $2$-trees.
A \emph{$2$-tree} is inductively defined by the following construction sequence:
\begin{itemize}
    \item A single edge~$xy$ is a $2$-tree\footnote{
        Usually this inductive definition of $2$-trees starts with a triangle, but for the arguments below it is more convenient to let a single edge be a $2$-tree as well.
    }.
    This first edge in the process is called the \emph{base edge}.

    \item If~$G$ is a $2$-tree and~$vw$ an edge of~$G$, then the graph obtained from $G$ by adding a new vertex~$u$ and edges~$uv$ and $uw$ is again a $2$-tree.
    In this case we say that vertex~$u$ is \emph{stacked} onto edge~$vw$.
\end{itemize}
We remark that 2-trees are exactly the edge-maximal graphs of treewidth 2.
Note that the construction sequence of a $2$-tree~$G$ is not unique.
In fact, for every $2$-tree~$G$ and every edge~$xy$ of~$G$ there is a construction sequence of~$G$ with~$xy$ as the base edge.
(And even for a fixed base edge, there can be exponentially many construction sequences of~$G$.)
However, as soon as the base edge~$xy$ is fixed, this uniquely determines for each vertex~$u$ different from~$x$ and~$y$ the edge~$vw$ that~$u$ is stacked onto.
In this case we call~$vw$ the \emph{parent edge} of~$u$, vertices~$v$ and~$w$ the \emph{parents} of~$u$, and likewise~$u$ a \emph{child} of~$v$ and~$w$.
Note that for each edge~$vw$ in~$G$ different from the base edge, its endpoints~$v$ and~$w$ are in a parent/child relationship. 

Maximal outerplanar graphs are exactly those $2$-trees in which at most one vertex is stacked onto each edge, except for the base edge onto which up to two vertices can be stacked.
In fact, the base edge~$xy$ is an inner edge in every outerplanar drawing if two vertices are stacked onto~$xy$, and otherwise~$xy$ is an outer edge.
In the literature, maximal outerplanar graphs are also known as the \emph{simple $2$-trees}~\cite{Knauer2012_SimpleTreewidth,Wulf2016_SimpleTreewidth}.

Let~$G$ be a connected outerplanar graph, see also \cref{fig:block_construction_tree_DAG} for an example which is a subgraph of a simple $2$-tree (for now, ignore the edge orientations).
A \emph{block} $B$ of~$G$ is either a bridge or a maximal $2$-connected component, while a vertex $v$ of $G$ is a \emph{cut vertex} if $G-v$ is disconnected.
The \emph{block-cut tree} of $G$ has as vertex set all blocks and all cut vertices of $G$ and an edge between block $B$ and cut vertex $v$ if and only if $v \in B$.
See \cref{fig:block_tree} for a block-cut tree of the graph in \cref{fig:block_construction_tree_DAG}.

\begin{figure}
    \centering
    \begin{subfigure}[t]{0.3\textwidth}
        \centering
        \includegraphics[scale=1.30909090909, page=1]{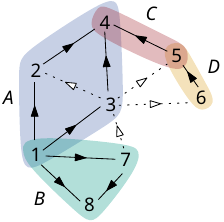}
        \caption{An outerplanar DAG~$G$.}
        \label{fig:block_construction_tree_DAG}
    \end{subfigure}
    \hspace{10em}
    \begin{subfigure}[t]{0.3\textwidth}
        \includegraphics[scale=1.30909090909, page=2]{figures/block_construction_tree}
        \caption{
            The block-cut tree of~$G$.
        }
        \label{fig:block_tree}
    \end{subfigure}
    \caption{
        An outerplanar DAG~$G$ with its block-cut tree.
        The construction sequence of~$G$ is $1, 2, \ldots, 8$.
        In particular, the base edge is from~$1$ to~$2$.
        Dotted edges are non-edges of $G$ whose addition to $G$ gives an outerplanar $2$-tree.
    }
    \label{fig:block_construction_tree}
\end{figure}

\subparagraph{Outerplanar DAGs and directed acyclic 2-trees.}
For the remainder of this paper we will exclusively consider \emph{directed graphs}, i.e., every edge~$e$ between two vertices~$v$ and~$w$ shall have a specified orientation, either from~$v$ to~$w$, or from~$w$ to~$v$.
In the former case we denote $e = vw$ and in the latter case $e = wv$.
In general, for every two disjoint vertex sets~$A$ and~$B$ we refer to the \emph{edges between~$A$ and~$B$} as those edges with exactly one endpoint in~$A$ and one endpoint in~$B$, regardless of their orientation.
On the other hand, the \emph{edges from~$A$ to~$B$} are those oriented from some vertex in~$A$ to some vertex in~$B$.
Notions like $2$-trees and outerplanar graphs are inherited to directed graphs from their underlying undirected graphs.
In particular, the \emph{blocks} of a directed graph are exactly the blocks of the underlying undirected graph.

A \emph{directed acyclic graph (DAG)} is a directed graph with no directed cycle, i.e., with no cycle $C = (v_1,\ldots,v_\ell)$, $\ell \geq 2$, with edges directed from $v_i$ to $v_{i+1}$ for $i=1,\ldots,\ell-1$ and from $v_\ell$ to $v_1$.
Consider a directed $2$-tree~$G$ with a fixed base edge~$xy$ (i.e., oriented from~$x$ to~$y$) and a vertex $u \neq x,y$ with parent edge~$vw$ (i.e., oriented from~$v$ to~$w$).
There are four possibilities for the directions of the edges between~$u$ and its parents~$v$ and~$w$ (see \cref{fig:stacking_types}):
\begin{itemize}
    \item If the edges~$wu$ and~$uv$ are in~$G$, then $(v,w,u)$ forms a directed cycle and we call~$u$ \emph{cyclic}.

    \item If the edges~$vu$ and~$uw$ are in~$G$, we call~$u$ \emph{transitive}.

    \item In the two remaining cases we call~$u$ \emph{monotone}.
    We say that~$u$ is a \emph{left child} of~$vw$ if $uv$ and~$uw$ are in~$G$, and that~$u$ is a \emph{right child} if~$vu$ and~$wu$ are in~$G$.
    Let us note that a left (right) child is to the left (right) of its parents in every topological ordering.
\end{itemize}
\begin{figure}[ht]
    \begin{subfigure}{0.15\textwidth}
        \centering
        \includegraphics[scale=1.30909090909, page=4]{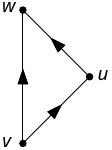}
        \caption{Cyclic.}
        \label{fig:stacking_types_cyclic}
    \end{subfigure}
    \hfill
    \begin{subfigure}{0.15\textwidth}
        \centering
        \includegraphics[scale=1.30909090909, page=1]{figures/stacking_types.pdf}
        \caption{Transitive.}
        \label{fig:stacking_types_transitive}
    \end{subfigure}
    \hfill
    \begin{subfigure}{0.3\textwidth}
        \centering
        \includegraphics[scale=1.30909090909, page=3]{figures/stacking_types.pdf}
        \caption{Monotone (left child).}
        \label{fig:stacking_types_outgoing_monotone}
    \end{subfigure}
    \hfill
    \begin{subfigure}{0.3\textwidth}
        \centering
        \includegraphics[scale=1.30909090909, page=2]{figures/stacking_types.pdf}
        \caption{Monotone (right child).}
        \label{fig:stacking_types_incoming_monotone}
    \end{subfigure}
    \caption{The four possible stackings of a new vertex~$u$ onto a directed edge~$vw$.}
    \label{fig:stacking_types}
\end{figure}
Observe that the notions of cyclic, transitive and monotone vertices crucially depend on the choice of the base edge.
Further, observe that a directed $2$-tree~$G$ is a DAG if and only if no vertex is cyclic, independent of the choice of the base edge.
So if~$G$ is a DAG, then every construction sequence involves only transitive and monotone vertices.

We say that a directed $2$-tree~$G$ is \emph{monotone} (respectively \emph{transitive}) if there exists a choice for the base edge~$xy$ such that every vertex except for~$x$ and~$y$ is monotone (respectively transitive).
In particular, a monotone (or transitive) directed outerplanar graph is always a maximal directed outerplanar graph.
A connected outerplanar (but not necessarily maximal outerplanar) DAG is called \emph{block-monotone} if every block is monotone, see e.g., \cref{fig:block_monotone}.
\begin{csidefigure}
    \centering
    \includegraphics[scale=1.30909090909]{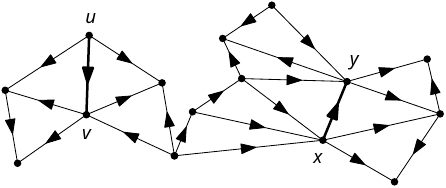}
    \caption{
        A block-monotone outerplanar DAG~$G$.
        Each of the two blocks of~$G$ contains a base edge ($xy$ and~$uv$) such that all other vertices are monotone.
    }
    \label{fig:block_monotone}
\end{csidefigure}

Given a maximal outerplanar DAG~$G$ with a fixed base edge~$xy$, its \emph{construction tree} with respect to~$xy$ is a rooted, undirected (and unordered) binary tree~$T$ on the vertices of~$G$ with vertex labels~\lab{M} (for \enquote{monotone}) and~\lab{T} (for \enquote{transitive}) such that\footnote{
    We remark that the construction tree (unrooted and without the labels) is exactly the tree of a nice tree-decomposition of width~$2$ of~$G$, but we do not use this fact here.
}:
\begin{itemize}
    \item The tail~$x$ of the base edge is the root and has label~\lab{M}.
    \item The head~$y$ of the base edge is the unique child of the root and also has label~\lab{M}.
    \item Whenever~$u$ is a child of an edge~$vw$ or $wv$ of~$G$ with~$w$ being a child of~$v$, then in $T$ we have that~$u$ is a child of~$w$.
    Moreover, vertex $u$ is labeled~\lab{M} in $T$ if~$u$ is a monotone vertex and~\lab{T} if~$u$ is a transitive vertex.
\end{itemize}
Observe that each vertex in the construction tree has at most two children.
For each vertex~$v$, the \emph{transitive subgraph below $v$} is the subgraph of~$G$ induced by~$v$ and all its descendants~$w$ in~$T$ such that the unique $v$-$w$-path in~$T$ consists solely of vertices labeled~$\lab{T}$ (except possibly~$v$ itself).
See \cref{fig:transitive_subgraphs} for a construction tree and the transitive subgraphs below monotone vertices.

\begin{csidefigure}
    \centering
    \includegraphics[scale=1.30909090909]{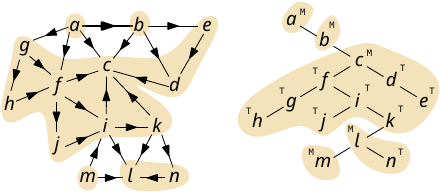}
    \caption{An outerplanar graph with base edge $ ab $ and its construction tree. The transitive subgraph below each monotone vertex is highlighted orange.}
    \label{fig:transitive_subgraphs}
\end{csidefigure}

\subparagraph{Stack number versus twist number.}

Recall that if in a given vertex ordering~$\prec$ of~$G$ we have a set of~$k$ pairwise crossing edges, we call this a $k$-twist in~$\prec$.
The maximum~$k$ such that there is a $k$-twist in~$\prec$ is called the \emph{twist number} of~$\prec$.
Then clearly with respect to this vertex ordering, $E(G)$ cannot be partitioned into fewer than~$k$ stacks.
The minimum twist number over all (topological) vertex orderings~$\prec$ of~$G$ is called the \emph{twist number}~$\tn(G)$ of~$G$.
Hence it follows that $\sn(G) \geq \tn(G)$ for every DAG~$G$, i.e., having large twists in every topological vertex ordering is a simple reason for having a large stack number.
Somewhat surprisingly, a large twist number is the only reason for a large stack number, up to a polynomial function.

\begin{theorem}[Davies, 2022~\cite{Davies2022_ColoringCircleGraphs}]
    \label{thm:stack_in_terms_of_twist}
    For every vertex ordering~$\prec$ of~$G$ with twist number~$k$, we can partition~$E(G)$ into $2k\log_2(k)+2k\log_2(\log_2(k)) + 10k$ stacks.
\end{theorem}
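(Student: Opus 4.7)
The plan is to reformulate the statement as a coloring problem on circle graphs. Given the ordering $\prec$, place the vertices of $G$ on a circle in the order $\prec$ and associate with each edge $ab$ the chord joining $a$ and $b$. Two edges are crossing with respect to $\prec$ precisely when their chords intersect, and a $k$-twist is exactly a $k$-clique in the resulting circle graph $C$. Partitioning $E(G)$ into $t$ stacks is therefore the same as properly $t$-coloring $C$, so it suffices to prove that every circle graph with clique number at most $k$ has chromatic number at most $2k\log_2(k)+2k\log_2(\log_2(k))+10k$.

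For the coloring bound I would proceed by induction on the number of chords, using a pivot-chord decomposition. Select a chord $c$; it splits the remaining chords into three groups: the chords entirely on one side of $c$, the chords entirely on the other, and the chords that cross $c$. The two one-sided groups inherit smaller circle graphs with clique number at most $k$, to which induction applies directly and which therefore can be colored with the inductive palette. For the chords crossing $c$, one exploits the fact that their endpoints along each side of $c$ induce a natural linear order: any family of pairwise crossing chords in this group, together with $c$, forms a clique, so a Dilworth-type argument decomposes the crossing chords into a bounded number of ``staircases'' of mutually non-crossing chords, each colorable with $O(k)$ fresh colors.

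The central obstacle, and the reason earlier bounds for $\chi$ in terms of $\omega$ on circle graphs were exponential in $k$, is to choose the pivot chord so that the three recursive subinstances are balanced and the number of additional colors needed at each level is only an additive $O(k)$ rather than a multiplicative factor of order $k$. A naive choice forces the recursion to shed only a constant number of chords per call, or alternatively to multiply the palette by roughly $\omega$ per level, either of which yields a superpolynomial bound. The improvement to $O(k \log k)$ relies on an averaging / probabilistic selection of the pivot, ensuring both that the recursion depth is $O(\log k)$ and that each level contributes only $O(k)$ extra colors; the $\log\log k$ correction emerges from the precise accounting of the recursion. I expect this amortization, combined with the careful formalization of what constitutes a \emph{good} pivot, to be the principal technical difficulty, whereas the translation to circle graphs and the ``staircase'' decomposition of chords crossing a fixed pivot are comparatively routine.
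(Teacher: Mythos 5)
Your first paragraph is exactly what the paper does for this statement: the paper does not prove the coloring bound itself, but only observes that the crossing graph of $E(G)$ with respect to $\prec$ is a circle graph whose clique number is the twist number of $\prec$, so that a proper coloring of that circle graph is a partition into stacks; the quantitative bound is then invoked as a black-box theorem of Davies on the $\chi$-boundedness of circle graphs. Up to that point your reduction is correct and identical to the paper's.

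Everything after that, however, is an attempt to reprove Davies' theorem, and as written it has genuine gaps. The one sound ingredient is the classical observation that the chords crossing a fixed pivot chord $c$ induce a permutation graph (via the two linear orders of their endpoints on the two arcs determined by $c$), hence can be properly colored with at most $\omega \leq k$ colors; this is the engine of Gy\'arf\'as' original argument. But the recursion you build around it is not justified: the two one-sided families have the same clique-number bound $k$ as the whole graph, so ``induction on the number of chords'' with a fixed palette does not terminate unless you control the recursion depth, and you offer no argument that a pivot can be chosen so that the depth is $O(\log k)$ rather than, say, $\Omega(k)$ or $\Omega(\log n)$ --- the na\"ive Gy\'arf\'as recursion is exactly what produces bounds exponential in $k$. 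The phrases ``averaging / probabilistic selection of the pivot'' and ``the $\log\log k$ correction emerges from the precise accounting'' are placeholders for the entire technical content of Davies' paper, not steps of a proof, and they do not reflect how that proof actually goes. Since the statement under review is explicitly attributed to Davies and the paper only needs the (correct) translation you gave in your first paragraph, the right move is to cite the circle-graph theorem rather than to sketch a speculative new proof of it; as a standalone proof attempt of the bound $2k\log_2(k)+2k\log_2(\log_2(k))+10k$, your argument is incomplete at its central step.
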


In fact, Davies~\cite{Davies2022_ColoringCircleGraphs} gives an upper bound on the chromatic number~$\chi(H)$ of a circle graph~$H$ in terms of its clique number~$\omega(H)$.
(Gy\'{a}rf\'{a}s~\cite{Gyarfas1985_CircleGraphsChiBounded} was the first to show that circle graphs are $\chi$-bounded, but Davies gives the first asymptotically tight bound.)
To obtain \cref{thm:stack_in_terms_of_twist}, simply consider the circle graph~$H$ with $V(H) = E(G)$ whose edges correspond to crossing  edges in~$\prec$.
Then~$\omega(H)$ is the twist number of~$\prec$ and a proper $k$-coloring of~$H$ is a partition of~$E(G)$ into~$k$ stacks with respect to~$\prec$.

\section{Outerplanar DAGs have Constant Stack Number}
\label{sec:outerplanar}

In this section we prove \cref{thm:outerplanar_bounded}, our first main result.
One key ingredient is a recent result by N\"{o}llenburg and Pupyrev~\cite{Noellenburg2021_DAGsWithConstantStackNumber} stating that the stack number of monotone outerplanar graphs is bounded.
The idea behind our approach is to partition a given outerplanar DAG~$G$ into \enquote{transitive parts}, such that the contraction of each part into a single vertex yields a block-monotone DAG~$H$.
Then the result from~\cite{Noellenburg2021_DAGsWithConstantStackNumber} can be applied to the blocks of~$H$ individually.
Two things are left to do:
First we show that the many stack layouts for the blocks of~$H$ can be combined into a single stack layout of $H$ without requiring too many additional stacks.
Then we show that each transitive part can be \enquote{decontracted} to yield a stack layout of $G$, again without requiring too many additional stacks.

We formalize all this by introducing a novel structural tool called \emph{directed \Hpartition{}s}.

\subsection{Monotone and Block-Monotone Outerplanar DAGs}

N\"{o}llenburg and Pupyrev~\cite{Noellenburg2021_DAGsWithConstantStackNumber} analyzed the stack number of different subclasses of outerplanar DAGs.
One of their results is that monotone outerplanar DAGs with at most one vertex stacked on the base edge\footnote{
    The definition of \emph{monotone} in~\cite{Noellenburg2021_DAGsWithConstantStackNumber} allows only one vertex stacked on the base edge.
    This is in contrast to our definition where the base edge is (the only edge) allowed to have two vertices stacked onto it.
} have bounded twist number (and therefore also bounded stack number).

\begin{theorem}[N\"{o}llenburg, Pupyrev \cite{Noellenburg2021_DAGsWithConstantStackNumber}]
    \label{thm:monotone_outerplanar_twist_4}
    Every monotone outerplanar DAG~$G$ with at most one vertex stacked on the base edge has twist number~$\tn(G) \leq 4$.
\end{theorem}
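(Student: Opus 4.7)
The plan is to exhibit a specific topological ordering $\prec$ of $V(G)$ whose twist number is at most $4$. I would construct $\prec$ incrementally along the construction sequence of $G$: starting from $x \prec y$, when the next vertex $u$ is added as a monotone left child of an edge $vw$, insert $u$ into $\prec$ immediately to the left of $\min_{\prec}(v, w)$; when $u$ is a right child, insert it immediately to the right of $\max_{\prec}(v, w)$. The invariant that every directed edge goes from left to right is maintained throughout, so $\prec$ is topological. The hypothesis that at most one vertex is stacked onto the base edge $xy$ ensures that this initial edge does not immediately force an unavoidable interleaving on either side of $\prec$.

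To bound the twist number of $(G,\prec)$, I would partition the non-base edges of $G$ into four \emph{types} according to two binary labels. The first label records whether the endpoint of the edge that is deeper in the construction tree $T$ is a left monotone child or a right monotone child in $G$. The second records whether the edge is a \emph{$T$-parent edge} (joining a vertex to its parent in $T$) or a \emph{$T$-grandparent edge} (joining a vertex to its other graph-parent, which is its grandparent in $T$). Since $G$ is outerplanar, at most one vertex is stacked on any edge, so $T$ is binary and every non-base edge falls into exactly one of these four types. The central claim is that two edges of the same type cannot cross in $\prec$: by the insertion rule, same-type edges come from nested ancestor chains in $T$, and the incremental insertion converts nested chains in $T$ into nested intervals in $\prec$.

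Granting this non-crossing claim, any pairwise crossing set of edges contains at most one edge per type, so $\tn(G,\prec)\leq 4$. The main obstacle is establishing the non-crossing claim rigorously, especially for two $T$-grandparent edges, whose endpoints are separated by entire subtrees of $T$. Verifying it requires an induction that tracks the exact positions the insertion rule assigns to each vertex; in particular one must show that whenever a subtree $T'$ of $T$ has been fully processed before a vertex $w$ outside $T'$ is inserted, no later insertion breaks the nesting of edges contributed by $T'$. I expect this bookkeeping to be the technical heart of the proof, and the key combinatorial input is again outerplanarity: since at most one vertex is stacked on any given edge, the subtrees attached to the two sides of each vertex of $T$ stay cleanly separated under $\prec$, which is what ultimately rules out a $5$-twist.
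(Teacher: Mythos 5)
First, note that the paper does not actually prove this statement: \cref{thm:monotone_outerplanar_twist_4} is imported verbatim from N\"ollenburg and Pupyrev and is used as a black box, so there is no internal proof to compare yours against. Judged on its own, your proposal has a genuine gap: the central claim that two edges of the same type cannot cross is false for the ordering your insertion rule produces. Take the base edge $xy$, stack a right child $a$ onto $xy$, a right child $b$ onto $xa$, a right child $c$ onto $ya$, a right child $d$ onto $ab$, and a right child $f$ onto $yc$. This is a monotone maximal outerplanar DAG with one vertex on the base edge. Processing the construction sequence $x,y,a,b,c,d,f$ with your rule (each right child inserted immediately to the right of the $\prec$-larger parent) yields $x \prec y \prec a \prec c \prec f \prec b \prec d$. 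The edges $ad$ and $yf$ are both $T$-grandparent edges whose deeper endpoints ($d$ and $f$) are right children, so they have the same type, yet $y \prec a \prec f \prec d$ shows they cross. Choosing a different construction sequence does not rescue the claim: for instance the order $x \prec y \prec a \prec b \prec d \prec c \prec f$ (obtained by processing $c$'s subtree last) makes the two same-type grandparent edges $xb$ and $yc$ cross. So the partition by (left/right child)\,$\times$\,(parent/grandparent edge) is not a proper coloring of the crossing graph, and the conclusion $\tn(G)\leq 4$ does not follow.

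The underlying problem is structural, not just a bookkeeping issue. In a monotone outerplanar DAG in which every stacked vertex is a right child, all non-base edges carry the label \enquote{right}, so your scheme offers only two classes (parent versus grandparent) and would force twist number at most $2$ for this whole subfamily; but already the small example above shows that the grandparent edges alone form arbitrarily long chains of pairwise overlapping arcs under any insertion rule of the \enquote{place $u$ adjacent to its parents} kind, because a grandparent edge must reach over the entire subtree hanging below the intervening $T$-parent. You would need either a finer (but still constant) classification whose classes you can actually prove crossing-free, or a different style of argument that bounds twists directly (e.g., showing that five pairwise crossing edges force a forbidden configuration in the construction tree). As written, the step you yourself flag as \enquote{the technical heart} is not merely unverified --- it is where the proof breaks.
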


\begin{corollary}
    \label{cor:monotone_outerplanar_stack_128}
    Every monotone outerplanar DAG~$G$ has stack number~$\sn(G) \leq 128$.
\end{corollary}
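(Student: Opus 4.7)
The plan is to reduce to \cref{thm:monotone_outerplanar_twist_4} by splitting $G$ across its base edge. Let $G$ be a monotone outerplanar DAG with base edge $xy$ witnessing its monotonicity. Since $G$ is maximal outerplanar, at most two vertices are stacked on $xy$. If at most one vertex is stacked on $xy$, then \cref{thm:monotone_outerplanar_twist_4} directly gives $\tn(G) \leq 4$, and \cref{thm:stack_in_terms_of_twist} applied with $k = 4$ immediately yields $\sn(G) \leq 2 \cdot 4 \log_2 4 + 2 \cdot 4 \log_2 \log_2 4 + 40 = 64 \leq 128$. So I focus on the case where two vertices are stacked on $xy$: a left child $u_L$ and a right child $u_R$.

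I would define $T_L$ as the subgraph of $G$ induced by $\{x,y\}$ together with $u_L$ and all descendants of $u_L$ in the construction tree rooted at $x$, and define $T_R$ symmetrically with $u_R$. Each $T_i$ is again a monotone outerplanar DAG with base edge $xy$ but now with at most one vertex stacked on $xy$, while $T_L$ and $T_R$ share only the vertices $\{x,y\}$ and the single edge $xy$. Applying \cref{thm:monotone_outerplanar_twist_4} to each $T_i$ gives topological orderings $\sigma_L, \sigma_R$ of $T_L, T_R$ of twist number at most $4$, and \cref{thm:stack_in_terms_of_twist} then produces partitions $\calP_L, \calP_R$ of $E(T_L), E(T_R)$ into at most $64$ stacks each with respect to $\sigma_L, \sigma_R$.

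Finally, I would merge $\sigma_L$ and $\sigma_R$ into a single topological ordering $\sigma$ of $G$ under which both $\calP_L$ and $\calP_R$ remain valid. Since $V(T_L) \cap V(T_R) = \{x,y\}$ and both orderings place $x$ before $y$, their union extends to a consistent partial order on $V(G)$, and any linear extension provides a topological ordering $\sigma$ of $G$ whose restriction to $V(T_i)$ coincides with $\sigma_i$ for $i \in \{L, R\}$. Because whether two edges cross depends only on the relative order of their four endpoints, and interleaving $V(T_R) \setminus \{x,y\}$ into $\sigma_L$ does not alter the relative order of any pair of $V(T_L)$-vertices (and symmetrically), each $\calP_i$ remains a stack partition of $E(T_i)$ under $\sigma$. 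Taking $\calP_L \cup \calP_R$ and placing the shared edge $xy$ in just one of its two parts yields a partition of $E(G)$ into at most $64 + 64 = 128$ stacks with respect to $\sigma$, proving $\sn(G) \leq 128$.

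The main subtlety is establishing this invariance of crossings under the merge together with $\sigma$ being simultaneously topological for all of $G$; both boil down to the observation that $T_L$ and $T_R$ share no vertices outside $\{x,y\}$ and no edges outside $\{xy\}$, so the merge introduces neither new topological constraints nor new crossings among edges confined to a single $T_i$.
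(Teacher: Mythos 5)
Your proof is correct and follows essentially the same route as the paper: split $G$ along the base edge into two monotone pieces each with at most one vertex stacked on it, apply \cref{thm:monotone_outerplanar_twist_4} and \cref{thm:stack_in_terms_of_twist} to get $64$ stacks for each, and merge the two layouts (which agree on the order of $x$ and $y$) using disjoint stack sets. The only cosmetic quibble is that the two vertices stacked on $xy$ need not be one left child and one right child, but your argument does not actually use that labeling.
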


\begin{proof}
    Let~$G = (V,E)$ be a monotone outerplanar DAG with base edge~$e$.
    Recall that the base edge of~$G$ may have two children so~$ G $ is the union of at most two monotone outerplanar DAGs~$G_1$ and~$G_2$ such that both use~$e$ as their base edge, both have at most one vertex stacked on~$e$, and their intersection is exactly~$e$.
    By \cref{thm:monotone_outerplanar_twist_4} for $i=1,2$ graph $G_i$ admits a topological ordering $\prec_i$ with twist number at most~$4$.
    Thus by \cref{thm:stack_in_terms_of_twist} we have
    \[
        \sn(G_i) \leq 2\cdot 4 \cdot \log_2(4) + 2\cdot 4 \cdot \log_2(\log_2(4)) + 10\cdot 4 = 64.
    \]
    As the endpoints of~$e$ appear in the same order in~$\prec_1$ and~$\prec_2$, the corresponding $64$-stack layouts of $G_1$ and $G_2$ can be combined into a $128$-stack layout of $G$.
\end{proof}

The next \lcnamecref{lem:bounded_by_blocks} and the following \lcnamecref{cor:monotone_outerplanar_stack_128} extend the bound for monotone DAGs to those that are block-monotone.
This will be important later, as block-monotonicity plays a crucial role in our proof that outerplanar DAGs have constant stack number.

\begin{lemma}
    \label{lem:bounded_by_blocks}
    Let~$G$ be a DAG and~$\calB$ be the set of its blocks.
    Then we have
    \[
        \sn(G) \leq 2 + 2 \cdot \max\limits_{B \in \calB} \sn(B)
        \text{.}
    \]
\end{lemma}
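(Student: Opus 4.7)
The plan is to root the block-cut tree $T$ of $G$ at an arbitrary block $B_0$ and combine the individual blocks' $k$-stack layouts into a global one, where $k = \max_{B \in \calB} \sn(B)$. For every non-root block $B$, let $v_B$ denote the cut vertex through which $B$ attaches to its parent block in $T$, and fix an optimal $k$-stack layout $(\sigma_B, c_B)$ of $B$ (where $c_B \colon E(B) \to \{1, \dots, k\}$). In $\sigma_B$, the vertex $v_B$ partitions $V(B) \setminus \{v_B\}$ into a \emph{left part} $L_B$ (the vertices before $v_B$) and a \emph{right part} $R_B$ (the vertices after).

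I would then construct the global topological ordering $\tau$ of $V(G)$ recursively, processing $T$ top-down: for each cut vertex $u$ of $G$ with child blocks $B_1, \dots, B_m$ in $T$, the left parts $L_{B_1}, \dots, L_{B_m}$ are placed immediately before $u$'s position in the parent's ordering (nested, with $L_{B_1}$ innermost and $L_{B_m}$ outermost), and the right parts $R_{B_1}, \dots, R_{B_m}$ immediately after $u$ symmetrically; these parts are in turn recursively expanded for their own internal cut vertices. An induction on the depth of $T$ shows that the restriction of $\tau$ to $V(B)$ coincides with $\sigma_B$ for every block $B$. Hence $\tau$ is a topological ordering of $G$ and, within each block's interval in $\tau$, the original $k$-stack layout of that block remains crossing-free because insertions of other blocks' vertices between consecutive vertices of $B$ preserve all relative orders and hence all nestings.

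For the coloring, I would assign to each edge $e \in E(B)$ one of $2+2k$ colors based on the relation of $e$ to $v_B$: edges with both endpoints in $L_B \cup \{v_B\}$ receive one of $k$ ``left'' colors inherited from $c_B$, edges with both endpoints in $\{v_B\} \cup R_B$ receive one of $k$ analogous ``right'' colors, and the remaining edges spanning $v_B$ (one endpoint in $L_B$, one in $R_B$) are absorbed into two additional stacks. The nested recursive construction around every cut vertex ensures that whenever two blocks $B, B'$ share a cut vertex $u$ or lie in an ancestor--descendant relation in $T$, the intervals of $B'$ that lie inside $B$'s interval in $\tau$ are either entirely nested inside or entirely disjoint from the interval of any ``left'' or ``right'' edge of $B$. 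I expect the main obstacle to be the combinatorial verification of this coloring: a case analysis over the relation of two blocks in $T$ (identical, disjoint subtrees, ancestor--descendant, or siblings at a common cut vertex) is needed to check that same-colored edges across different blocks are either nested or disjoint in $\tau$, with the only genuinely crossing pairs --- an edge of one block ending at a shared cut vertex and an edge of another block spanning that cut vertex --- being precisely those absorbed by the two additional stacks. The factor of $2$ on $k$ reflects the left/right partition of each block, while the additive $2$ accommodates these unavoidable inter-block crossings at shared cut vertices.
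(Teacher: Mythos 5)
Your vertex ordering is essentially the paper's (nested left parts immediately before each cut vertex, nested right parts immediately after), but your coloring has a genuine gap: the edges of a block $B$ that span its attaching cut vertex $v_B$ (one endpoint in $L_B$, one in $R_B$) cannot be absorbed into two additional stacks, because such edges can pairwise cross arbitrarily much \emph{within a single block}. Concretely, let $B$ be the graph $G_k$ from the introduction (directed path $\ell_1,\dots,\ell_k,r_1,\dots,r_k$ plus the matching $\ell_ir_i$) with a pendant edge attached at $\ell_k$, and root the block-cut tree at the pendant block, so that $v_B=\ell_k$. The topological ordering of $B$ is unique, and the $k-1$ matching edges $\ell_1r_1,\dots,\ell_{k-1}r_{k-1}$ all go from $L_B$ to $R_B$ and pairwise cross; for $k\geq 4$ they do not fit into two stacks. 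Your case analysis anticipates that the only dangerous crossings are \emph{inter}-block (an edge ending at a shared cut vertex versus an edge spanning it), but the scheme already fails \emph{intra}-block. Moreover, you have the roles reversed: the edges that can safely be demoted to a single extra stack are the ones \emph{incident} to the shared cut vertex (they form a star and are pairwise non-crossing), not the spanning edges.

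The paper's proof keeps each block's own $k$-stack coloring intact --- in particular the spanning edges retain their original $k$ colors --- and resolves the inter-block conflicts differently: it uses two disjoint palettes of $k+1$ stacks each, assigned to blocks according to the parity of their depth in the block-cut tree (blocks of the same parity are either separated or nested, hence non-crossing), and within each palette the $(k+1)$-st stack receives all edges of the child blocks incident to the shared cut vertex, which form a star. This yields $2(k+1)=2+2k$ stacks. Your argument could be repaired by adopting this allocation; as written, the ``two additional stacks'' step is false.
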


\begin{proof}
    We may assume that~$G$ is connected.
    Let~$T$ be the block-cut tree of~$G$ rooted at an arbitrary block of~$G$ and let $s = \max\{\sn(B) \mid B \in \calB\}$ be the maximum stack number among all blocks of~$G$.
    We incrementally construct a stack layout of~$G$ by processing the blocks of $G$ according to their level in~$T$ one after another; the level of a block $B$ being the number of cut vertices on the path from $B$ to the root in~$T$.
    In doing so, we maintain the following two invariants:
 
    \begin{enumerate}[({I}1)]
        \item\label{inv:total_stacks}
        At most~$2(s+1)$ stacks are used in total.
        \item\label{inv:block_stacks}
        At most~$s+1$ stacks are used for each block.
    \end{enumerate}

    We start with the root block, which (like all other blocks) admits an $s$-stack layout by assumption.
    This fulfills the invariants~\invref{inv:total_stacks} and~\invref{inv:block_stacks} trivially.

    Now consider a block~$B$ in level~$\ell \geq 0$.
    We assume that $ B $ is already laid out and insert its children $B_1, \ldots, B_k$ into the layout.
    For this, repeat the following for every cut vertex~$v$ that~$B$ shares with blocks $B_1, \ldots, B_k$ in level~$\ell+1$.
    For $i = 1, \ldots, k$ take an $s$-stack layout of~$B_i$ (which exists by assumption) and let~$L_i$ and~$R_i$ denote the sets of vertices to the left of~$v$ and to the right of~$v$ in the $s$-stack layout for~$B_i$, respectively.
    We insert the layouts of $B_1, \ldots, B_k$ directly to the left and directly to the right of~$v$ such that the vertices appear in the following order (see also \cref{fig:block_stacks}):
    \begin{sidefigure}
        \centering
        \includegraphics[scale=1.30909090909]{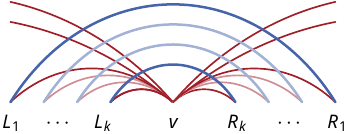}
        \caption{
            Integration of the stack layouts of blocks~$B_1, \ldots, B_k$ around the cut vertex~$v$ in the current partial stack layout.
            Blocks~$B_1, \ldots, B_k$ can use the same set of~$s$ stacks except for all edges incident to~$v$ which are on the $(s+1)$-th stack.
        }
        \label{fig:block_stacks}
    \end{sidefigure}
    \[
        L_1 \prec \cdots \prec L_k \prec
        v \prec
        R_k \prec \cdots \prec R_1
    \]

    Let~$E'$ denote the set of all edges that are in $B_1, \ldots, B_k$.
    An edge from~$E'$ and an edge~$e$ from a smaller level can cross only if~$e$ is incident to~$v$ and therefore belongs to~$B$.
    As the edges of~$B$ use at most~$s+1$ stacks by invariant~\invref{inv:block_stacks}, there are another~$s+1$ stacks available for the edges in~$E'$.
    To assign the edges of~$E'$ to stacks we start with the $s$-stack layouts of each block and then move all edges incident to~$v$ to the $(s+1)$-th stack.
    Observe that edges in~$E'$ belonging to different blocks can only cross if exactly one of them is incident to~$v$.
    As the edges incident to~$v$ form a star centered at~$v$, we conclude that all stacks are crossing-free and~$s+1$ stacks indeed suffice for~$E'$, maintaining~\invref{inv:block_stacks}.

    Finally, obverse that children of different cut vertices are separated in the layout and thus their edges do not cross.
    Therefore, we have a $(2s+2)$-stack layout for all blocks that are already completed, maintaining~\invref{inv:total_stacks}.
\end{proof}

Now \cref{cor:monotone_outerplanar_stack_128,lem:bounded_by_blocks} immediately imply the following.

\begin{corollary}
     \label{cor:block_monotone_bounded}
     Every block-monotone outerplanar DAG admits a $258$-stack layout.
\end{corollary}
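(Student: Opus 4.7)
The plan is to observe that the corollary is an immediate consequence of chaining the two results just proved: \cref{cor:monotone_outerplanar_stack_128} bounding the stack number of a single monotone outerplanar DAG, and \cref{lem:bounded_by_blocks} bounding the stack number of any DAG in terms of the stack numbers of its blocks. There is essentially no obstacle here; the only thing to verify is that the definition of a block-monotone outerplanar DAG is compatible with applying \cref{cor:monotone_outerplanar_stack_128} to each block separately.

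First I would let $G$ be a block-monotone outerplanar DAG and let $\calB$ denote its set of blocks. By the definition of block-monotonicity given just before \cref{fig:block_monotone}, each $B \in \calB$ is itself a monotone outerplanar DAG (with some choice of base edge internal to $B$ such that every other vertex of $B$ is monotone with respect to that base edge). Therefore \cref{cor:monotone_outerplanar_stack_128} applies to every block and gives $\sn(B) \leq 128$ for all $B \in \calB$.

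Plugging this uniform bound into \cref{lem:bounded_by_blocks}, I obtain
\[
    \sn(G) \;\leq\; 2 + 2 \cdot \max_{B \in \calB} \sn(B) \;\leq\; 2 + 2 \cdot 128 \;=\; 258,
\]
which is exactly the claimed bound. If $G$ is disconnected, then the same argument applies to each connected component, and the layouts of the components can be concatenated along the vertex ordering without introducing any crossings between edges of different components, so the bound $258$ still holds for the whole graph.
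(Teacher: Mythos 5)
Your proof is correct and follows exactly the paper's argument: the paper derives the corollary as an immediate consequence of \cref{cor:monotone_outerplanar_stack_128} applied to each block together with \cref{lem:bounded_by_blocks}, giving $2 + 2\cdot 128 = 258$. The remark on disconnected graphs is harmless but unnecessary, since the paper defines block-monotone DAGs to be connected.
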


\subsection{Directed \texorpdfstring{$\boldsymbol{H}$}{H}-Partitions}
\label{sec:directed-H-partitions}

We introduce \emph{directed \Hpartition{}s} as a new structural tool and explore how they can be applied to reason about stack layouts of DAGs.
They are not limited to DAGs nor to treewidth~$2$, and we believe they may be applicable also for other graph classes in the future.
A related tool known as \emph{layered \Hpartition{}s} was introduced in~\cite{Dujmovic2020_PlanarGraphsBoundedQueueNumber} to establish that undirected planar graphs have bounded queue number and has since been used widely. 
Thus, we state and prove the lemmas in this section in a more general form than we actually need them.
The construction of a directed \Hpartition for outerplanar DAGs is deferred to \cref{sec:construct_H_partition}.
The definition of directed \Hpartition{}s is illustrated in \cref{fig:directed-H-partition}.

\begin{figure}
    \centering\includegraphics[scale=1.30909090909]{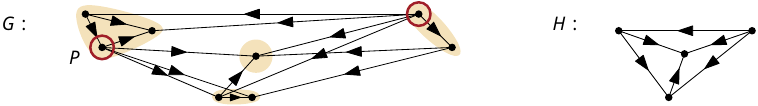}
    \caption{A directed \Hpartition (orange) of a directed graph $ G $ (left) and the quotient $ H $ (right). The cut cover number of the part $ P $ is $ 2 $ as the two vertices marked in red cover all edges with exactly one endpoint in $ P $.}
    \label{fig:directed-H-partition}
\end{figure}

\begin{samepage}
\begin{definition}[Directed $H$-Partition, Cut Cover Number]\label{def:directed-H-partition}
    Let~$G$ and~$H$ be directed graphs.
    A \emph{directed \Hpartition} of~$G$ is a partition $\calP$ of~$V(G)$ such that the following holds:
    \begin{itemize}
        \item For every two parts~$P,Q \in \calP$ the edges of~$G$ 
        between $P$ and $Q$
        are oriented either all from~$P$ to~$Q$ or all from~$Q$ to~$P$.
        \item The quotient $G/\calP$ is isomorphic to~$H$.
        Here~$G/\calP$ is obtained from~$G$ by contracting each part $P \in \calP$ into a single vertex $v_P$ and directing an edge from $v_P$ to $v_Q$ in $H$ whenever in $G$ there is some edge from $P$ to $Q$.
        (This orientation is well-defined by the first property.)
    \end{itemize}
    For a part $P \in \calP$ its \emph{cut cover number} is the smallest number of vertices of~$G$ required to cover (i.e., be incident to) all edges of~$G$ with exactly one endpoint in~$P$.
    The cut cover number of~$\calP$ is the maximum cut cover number among all its parts.
\end{definition}
\end{samepage}

So each vertex $v_P$ in $H$ corresponds to a part in $P \in \calP$ and a subgraph of $G$, denoted by $ G[v_P] = G[P] $, that is induced by the vertices in $P$.
More generally, every induced subgraph $B$ of $H$ corresponds to a subset $\calP_B \subseteq \calP$ of parts, and we let $G[B]$ denote the corresponding subgraph of $G$ that is induced by all vertices of $G$ contained in parts in $\calP_B$.

The definition of directed \Hpartition{}s is very similar to the well-known concept of (undirected) \Hpartition{}s.
The main difference and difficulty is that we need to ensure that the quotient is well-defined, i.e., the orientation of the edges between two parts is consistent.
Nevertheless, many useful properties of the undirected version are inherited.
In particular, if every part is connected, then the quotient is a minor of the underlying graph, which implies that treewidth and planarity are preserved.
Although not used here, we remark that the successful idea of Dujmović et al.~\cite{Dujmovic2020_PlanarGraphsBoundedQueueNumber} to combine \Hpartition{}s with layerings to so-called layered \Hpartition{}s is also feasible in the directed setting.

\newpage
Given directed graphs\footnote{Neither \cref{def:directed-H-partition} nor the definition of expanding vertex orderings requires $G$ or $H$ to be acyclic. However, in this paper we shall consider solely constellations where $G$ and $H$ are both DAGs.} $G$ and $H$ and a directed \Hpartition~$\calP$ of $G$, we say that a vertex ordering $\prec_G$ of~$G$ \emph{expands} a vertex ordering $\prec_H$ of~$H$ if all vertices of~$G$ belonging to the same part of~$\calP$ appear consecutively in~$\prec_G$ and whenever $P \in \calP$ lies to the left of $Q \in \calP$ in $\prec_G$, then $v_P \in V(H)$ lies to the left of $v_Q \in V(H)$ in $\prec_H$.

\begin{lemma}
\label{lem:bounded_H_partition_to_stack_layout}
    Let~$G$ and~$H$ be DAGs and~\calP be a directed \Hpartition of~$G$ with cut cover number at most~$w$.
    Further, let $\sn(G[P]) \leq s$ for each $P \in \calP$.
    Then for every $h$-stack layout $\prec_H$ of~$H$, there is a $(3wh + s)$-stack layout $\prec_G$ of~$G$ expanding $\prec_H$.
    
    In particular, we have $\sn(G) \leq 3w \cdot \sn(H) + s$.    
\end{lemma}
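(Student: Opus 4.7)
The plan is to build $\prec_G$ by placing the parts of $\calP$ consecutively in the order inherited from $\prec_H$ and, inside each part $P$, ordering its vertices according to some fixed $s$-stack layout of $G[v_P]$. First I verify that $\prec_G$ is a topological ordering of $G$: intra-part edges point to the right inside their part because the chosen layout of $G[v_P]$ is topological, and for inter-part edges between parts $P$ and $Q$, the first condition of \cref{def:directed-H-partition} forces them to be oriented in a common direction, and this direction matches the $H$-edge between $v_P$ and $v_Q$; hence $\prec_H$ being topological forces the tail-part to precede the head-part in $\prec_G$.

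For the edge coloring I handle intra-part and inter-part edges separately. Intra-part edges of different parts occupy disjoint intervals of $\prec_G$ and hence cannot cross each other, so the $s$ colors coming from the individual part-layouts can be reused globally, yielding a common set of $s$ stacks covering all intra-part edges.

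For inter-part edges, the key structural observation is that two inter-part $G$-edges can cross in $\prec_G$ only if the $H$-edges they project to either cross in $\prec_H$ or share a vertex of $V(H)$: if the two projected $H$-edges lie in disjoint intervals of $\prec_H$, then the $G$-edges lie in disjoint intervals of $\prec_G$; and if the projected $H$-edges are strictly nested with no common $H$-endpoint, the $G$-edges are likewise nested. I would therefore first partition the inter-part edges into $h$ classes according to which $H$-stack carries their projection, so that within a class only shared-endpoint crossings remain. To split each such class into $3w$ substacks, fix for every part $P$ an enumeration $c_1(P),\ldots,c_w(P)$ of a minimum cover set $C_P$, and assign every inter-part $G$-edge $e$ a label in $\{L,M,R\} \times \{1,\ldots,w\}$: the index records which cover vertex of the left or right part of $e$ is responsible for covering $e$, while the type records whether that cover vertex lies in the left part, in the right part, or across both.

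The main obstacle is to verify that two inter-part $G$-edges with the same label cannot cross. This reduces to a short case analysis over the three possible shared-endpoint configurations of projected $H$-edges in a common $H$-stack (both left-ends identified, both right-ends identified, or left-of-one identified with right-of-other); in each configuration the matching labels pin down a common vertex of $G$ shared by the two $G$-edges, ruling out a crossing. The factor $3$ in $3w$ is precisely the price for accommodating these three configurations. Combining the $h$ $H$-stack classes gives $3wh$ stacks for inter-part edges which, together with the $s$ stacks for intra-part edges, yields the desired $(s+3wh)$-stack layout of $G$ expanding $\prec_H$. The in-particular statement follows by choosing $h=\sn(H)$.
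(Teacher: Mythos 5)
Your setup matches the paper's: the same expanded ordering $\prec_G$, the same reuse of $s$ stacks for intra-part edges via disjoint intervals, and the same key observation that after splitting the inter-part edges into $h$ classes according to the stack of $\prec_H$ carrying their projection, the only surviving crossings are between $G$-edges whose projected $H$-edges share a vertex. (Your explicit check that $\prec_G$ is topological is a point the paper leaves implicit.) The gap is in the final step, the split of each such class into $3w$ stacks. For your labels to work, each of the three ``types'' must have the property that any two edges of that type whose projections share an $H$-vertex $v$ both \emph{designate} $v$ (so that the index into the cut cover of $v$'s part pins down a common $G$-vertex). A type class with this property is forced to be a star forest in $H$: if $f_1,f_2$ share $u$ and $f_2,f_3$ share $v\neq u$, then $f_2$ would have to designate both $u$ and $v$. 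So what you actually need is a partition of each stack of $\prec_H$ (which is an outerplanar graph) into three star forests --- this is the star-arboricity bound for outerplanar graphs that the paper invokes, and it is a global structural fact, not something a per-edge rule can produce. Your proposed label (``whether that cover vertex lies in the left part, in the right part, or across both'') is not well defined --- a single cover vertex cannot lie ``across both'' parts --- and under any local reading it fails: take $H$ a directed path $v_1\to v_2\to v_3\to v_4$ with all three edges in one stack of $\prec_H$. The three edges are pairwise in identical local configurations, so a rule depending only on the edge and its two parts assigns them all the same type; but then the consecutive pairs share different $H$-vertices, the indices refer to cut covers of different parts, and two edges with equal labels (say one from the $v_2$-interval into the $v_3$-interval and one from the $v_3$-interval into the $v_4$-interval, with no common $G$-endpoint) can genuinely cross in $\prec_G$.

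Your ``three shared-endpoint configurations'' (left--left, right--right, left--right) are properties of \emph{pairs} of edges, and the factor $3$ cannot be obtained by labelling each single edge with the configuration it is ``in''. The fix is exactly the paper's: observe that each stack $S$ of $\prec_H$ is outerplanar, partition $S$ into three star forests, note that two crossing $G$-edges projecting into the same star forest must project into the same star (their endpoints lie in at most three parts, so the two $H$-edges share a vertex, which must be the star's center $v_P$), and then spend the $w$ stacks on the cut cover of $P$, each cover vertex contributing a non-crossing star in $G$. Everything else in your argument survives unchanged once this decomposition is in place.
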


\begin{proof}
    We expand a given~$h$-stack layout $\prec_H$ of~$H$ to a $(3wh + s)$-stack layout $\prec_G$ of~$G$.
    For each part $P \in \calP$ consider an $s$-stack layout~$\prec_P$ of $ G[P] $, which exists by assumption,
    and replace in~$\prec_H$ vertex~$v_P$ corresponding to $P$ by the vertices in~$P$ ordered as in~$\prec_P$.
    As in the resulting vertex ordering $\prec_G$ of~$G$ all vertices from the same part appear consecutively (in other words, $\prec_G$ expands $\prec_H$), it follows that no two edges~$e_1, e_2$ in $G$ belonging to different parts in $\calP$ cross.
    Therefore, we may assign all edges with both endpoints in the same part to the same set of~$s$ stacks.

    It remains to assign the edges of~$G$ with endpoints in two different parts to the remaining~$3wh$ stacks.
    For this we consider each stack $S$ in the $h$-stack layout of~$H$ separately and show that all edges of~$G$ 
    corresponding to edges in~$S$
    can be assigned to~$3w$ stacks.
    (An edge $vw \in E(G)$ with $v \in P$ and $w \in Q$ corresponds to the edge~$v_Pv_Q \in E(H)$.)
    First, we note that the edges in~$S$ form an outerplanar subgraph of~$H$.
    As such, it can be partitioned into three star forests~\cite{Hakimi1996_StarAboricity}.
    Again, we can treat each star forest separately, and we are left with assigning the edges in~$G$ corresponding to the same star forest $F$ in $H$ to at most~$w$ stacks.

    For this, consider two edges~$e_1, e_2$ of~$G$ corresponding to the same star forest~$F$ that cross in our chosen vertex ordering $\prec_G$ of~$G$.
    If their endpoints are in four different parts in~$\calP$, then their corresponding edges in~$H$ cross, which is impossible.
    Therefore, the endpoints of~$e_1$ and~$e_2$ lie in at most three different parts.
    Hence, there is one part containing at least two of the endpoints, and we conclude that~$e_1$ and~$e_2$ actually correspond to the same star in $F$.
    Thus it suffices to consider the stars of~$F$ separately and reuse the same set of~$w$ stacks for all stars from~$F$.
    Now consider all edges of $G$ corresponding to the same star~$X$ of~$F$.
    As the cut cover number of~$\calP$ is at most~$w$, there is a set~$V' \subseteq V(G)$ with~$|V'| \leq w$ such that every edge of~$G$ corresponding to~$X$ is incident to (at least) one vertex in~$V'$.
    For each vertex $v' \in V'$ its incident edges form a star in $G$ and are therefore non-crossing in $\prec_G$.
    Thus, we can assign all incident edges at $v'$ corresponding to $X$ to the same stack.
    
    This requires at most $w$ stacks for each star $X$ in $F$, hence also at most $w$ stacks for each star forest $F$ of~$S$. To sum up, we have at most $3w$ stacks for each stack $S$ of the $h$-stack layout of $H$.
    Including the $s$ stacks from the beginning, this yields at most $3wh + s$ stacks in total.
\end{proof}

\cref{lem:bounded_H_partition_to_stack_layout} gives a good stack layout of $G$, provided $G$ admits a directed \Hpartition with small cut cover number for some $H$ with small stack number.
The notion of the cut cover number enables us to give the bound on the stack number independently of the size of the parts.
We remark that without a bound on the cut cover number, there may be a twist between the vertices of two parts that is as large as the smaller of the two parts.
For the next \lcnamecref{lem:H_partition_to_stack_layout}, we loosen the prerequisites by considering the blocks of $ H $ separately.
First, we require for each block $B$ of $H$ that the corresponding subgraph $G[B]$ has a good stack layout (for example, due to a small cut cover number of the inherited directed \Hpartition of $G[B]$).
And second, the interactions between the blocks of $H$ sharing a common cut vertex $v$ are restricted.

\begin{restatable}{lemma}{HPartitionToStackLayout}
    \label{lem:H_partition_to_stack_layout}\RestateRemark
    Let~$G$ be a DAG with a directed \Hpartition $\calP$ such that 
    \begin{itemize}
        \item for every block~$B$ of~$H$ the subgraph~$G[B]$ of~$G$ admits an $s$-stack layout expanding some vertex ordering of $H$.
    \end{itemize}
    Moreover, let $T$ be the block-cut tree of~$H$ rooted at some block of $H$, such that for every cut vertex~$v$ of~$H$ with
    child blocks~$C_1, \ldots, C_k$ in~$T$ the following holds:
    \begin{itemize}
        \item For $i=1,\ldots,k$, the intersection of $G[v]$ with the neighborhood of $ G[C_i - v]$ consists of a single edge $e_i \in E(G[v])$. 
        \item Edges $e_1,\ldots,e_k$ can be covered with at most $p$ directed paths in $G[v]$.
        \item For each edge $e \in E(G[v])$ we have $e = e_i$ for at most $t$ indices $i \in \{1,\ldots,k\}$.
    \end{itemize}
    Then~$\sn(G) \leq 4spt$.
\end{restatable}

\begin{figure}
    \includegraphics[scale=1.30909090909]{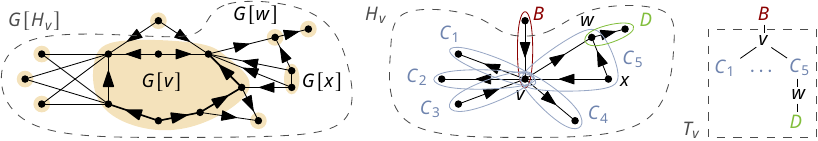}
    \caption{%
        Left: A DAG $ G $ with a directed \Hpartition as in \cref{lem:H_partition_to_stack_layout} with $ p = 2 $ and $ t = 3 $. Some edge directions are omitted for better readability.
        Middle: $ H $ and its blocks.
        Right: The block-cut tree of $ H $.
    }
    \label{fig:H_partition_to_stack_layout}
\end{figure}

We refer to \cref{fig:H_partition_to_stack_layout} for an illustration of the situation in \cref{lem:H_partition_to_stack_layout}.
In the following proof, we denote the neighborhood of a subgraph $ G' $ of some graph $ G $ by $ N(G') $.

\begin{proof}
    Let $v$ be a vertex in $H$.
    Recall that $v = v_P$ represents a part $P \in \calP$ in the directed \Hpartition and $G[v] \subseteq G$ is an induced subgraph of $G$.
    If $v$ is a cut vertex of $H$, we associate to $v$ also another subgraph of $G$ by considering everything below $v$ in the block-cut tree $T$.
    Formally, let $T_v$ denote the subtree of $T$ with root $v$ and let $H_v$ denote the subgraph of $H$ that is the union of all blocks in $T_v$.
    Then the corresponding subgraph $G[H_v]$ is the subgraph of $G$ induced by the union of all parts $P \in \calP$ for which the vertex $v_P$ in $H$ appears in some block $B$ in $T_v$.
    See again \cref{fig:H_partition_to_stack_layout} for an illustration of the notation.
    
    We shall find a $4spt$-stack layout of $G$ whose vertex ordering $\prec_G$ has the following properties:
    
    \begin{enumerate}[({I}1)]
        \item\label{inv:regions_below_cut_vertices}
        For every cut vertex $v$ of $H$ the vertices in $G[H_v]$ appear consecutively in $\prec_{G}$.
        \item\label{inv:regions_below_non-cut_vertex}
        For every non-cut vertex $v$ of $H$ the vertices in $G[v]$ appear consecutively in $\prec_{G}$.
    \end{enumerate}

    Assuming $\prec_G$ satisfies \invref{inv:regions_below_cut_vertices}, the following holds:
    
   \begin{claim}\label{clm:disjoint_block_do_not_cross}
      For every two vertex-disjoint blocks $B_1,B_2$ of $H$, no edge in $G[B_1]$ crosses an edge in $G[B_2]$ with respect to $\prec_G$.
   \end{claim}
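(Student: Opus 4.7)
The plan is to locate, for any two vertex-disjoint blocks $B_1, B_2$ of $H$, either a single cut vertex $v$ whose interval $V(G[H_v])$ contains $V(G[B_i])$ for one index $i$ while being disjoint from $V(G[B_{3-i}])$, or two incomparable cut vertices $v_1, v_2$ whose intervals separately contain $V(G[B_1])$ and $V(G[B_2])$. Combined with the consecutiveness guaranteed by \invref{inv:regions_below_cut_vertices}, either configuration precludes crossings between $G[B_1]$ and $G[B_2]$: in the first case every edge of $G[B_{3-i}]$ lies entirely outside the consecutive interval that contains both endpoints of every edge of $G[B_i]$, so the two are necessarily disjoint or strictly nested; in the second case the two intervals are themselves disjoint consecutive ranges, hence one is entirely left of the other in $\prec_G$.

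The first key step is the auxiliary observation that whenever $v_1, v_2$ are incomparable cut vertices of $H$ in the rooted tree $T$, the vertex sets $V(H_{v_1})$ and $V(H_{v_2})$ are disjoint. A putative common vertex would have to be a cut vertex $y$ sitting in two different blocks $B' \in T_{v_1}$ and $B'' \in T_{v_2}$; since $y$ is adjacent to both in $T$, the unique path $B' - y - B''$ would have length two, but this path must also traverse the LCA of $v_1$ and $v_2$, which forces length at least four. Hence $V(G[H_{v_1}])$ and $V(G[H_{v_2}])$ are disjoint, and by \invref{inv:regions_below_cut_vertices} fully separated in $\prec_G$.

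The main proof is then a case distinction on the LCA $X_m$ of $B_1, B_2$ on their unique tree path $B_1 = X_0 - X_1 - \cdots - X_k = B_2$; vertex-disjointness of $B_1, B_2$ forces $k \geq 4$. If $X_m$ is a block strictly interior to the path, then $X_{m-1}$ and $X_{m+1}$ are incomparable cut vertices containing $B_1$ and $B_2$ in their subtrees, and the auxiliary step concludes. Otherwise, vertex-disjointness forces at least one of $B_1, B_2$, say $B_2$, to sit strictly below a block child of the entry cut vertex into its subtree; we then pick $u$ to be the cut vertex just below that block child, so that $B_2 \in T_u$ and $u$ is at tree distance at least three from $B_1$. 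A repetition of the short path-length argument yields $V(B_1) \cap V(H_u) = \emptyset$, and taking $v = u$ closes this case. The main obstacle I expect is handling the degenerate configurations in which $B_1$ or $B_2$ coincides with a direct child of the LCA in $T$; this is precisely where vertex-disjointness is indispensable, as it prevents both blocks from being such children simultaneously and thus guarantees that at least one of them can be pushed deep enough for the distance estimate to close.
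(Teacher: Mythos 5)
Your proof is correct and takes essentially the same route as the paper's: both rely on \invref{inv:regions_below_cut_vertices} to obtain consecutive intervals $V(G[H_v])$ for suitably chosen cut vertices $v$ and then conclude either from two disjoint such intervals or from one interval that contains all of $G[B_i]$ while excluding all of $G[B_{3-i}]$. The paper simply takes $v_1,v_2$ to be the parent cut vertices of $B_1,B_2$ and splits on whether the subtrees rooted at the two blocks are disjoint or nested, whereas you organize the same separation argument around the LCA and make the block-cut-tree distance bounds (which the paper leaves implicit) explicit.
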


    \begin{subproof}
        Let $T_1$ and $T_2$ be the subtrees of $T$ rooted at $B_1$ and $B_2$, respectively.
        First assume that $T_1$ and $T_2$ are disjoint, which in particular means that neither $B_1$ nor $B_2$ is the root of $T$.
        With $v_1, v_2$ being the parents of $B_1, B_2$ in $T$, respectively, we have $v_1 \neq v_2$ since $B_1$ and $B_2$ are vertex-disjoint.
        Then \invref{inv:regions_below_cut_vertices} gives that vertices of $G[B_1] \subseteq G[H_{v_1}]$ and $G[B_2] \subseteq G[H_{v_2}]$ appear in $\prec_G$ in disjoint intervals.
        Thus no edge in $G[B_1]$ crosses an edge in $G[B_2]$.

        If $T_1$ and $T_2$ are not vertex-disjoint, assume without loss of generality that $T_1 \subset T_2$; in particular that $B_1$ is not the root of $T$.
        Then by \invref{inv:regions_below_cut_vertices} the vertices of $G[H_{v_1}]$ form in $\prec_G$ a contiguous interval $I$.
        In particular, every edge in $G[B_1] \subseteq G[H_{v_1}]$ has both endpoints in $I$.
        As $B_1$ and $B_2$ are vertex-disjoint, we have $G[B_2] \cap G[H_{v_1}] = \emptyset$ and every edge in $G[B_2]$ has neither endpoint in $I$.
        Thus no edge in $G[B_1]$ crosses an edge in $G[B_2]$.
    \end{subproof}

    \Cref{clm:disjoint_block_do_not_cross} allows us to reuse the same set of stacks for vertex-disjoint blocks.
    With this in mind, we partition the blocks of $H$ into two sets~$\calB_\text{odd}$ and~$\calB_\text{even}$ containing the blocks with an odd, respectively even number of cut vertices on their path to the root in $T$.
    Then it is enough to use a set of $2spt$ stacks for blocks in $\calB_\text{odd}$ and a set of $2spt$ different stacks for $\calB_\text{even}$, giving the desired $4spt$ stacks in total.
    Observe that within the same set of blocks, say $\calB_\text{odd}$, two blocks are either again vertex-disjoint (and thus non-crossing by \cref{clm:disjoint_block_do_not_cross}) or have a common parent in~$T$.
    Thus it is left to consider a single cut vertex and its child blocks.

    We now construct the desired $4spt$-stack layout $\prec_G$ of $G$ by processing the block-cut tree~$T$ from the root to the leaves.
    After initializing the root block, in each step we consider a cut vertex $v$ whose parent block $B$ is already processed and process all child blocks of $v$ simultaneously.
    In doing so, we maintain after each step \invref{inv:regions_below_cut_vertices} and \invref{inv:regions_below_non-cut_vertex}
    for the already processed subgraph of $G$.
    To initialize, the root of $T$ is a single block of $H$ and admits an $s$-stack layout expanding some vertex ordering $\prec_H$ of $H$ by assumption.
    This fulfills \invref{inv:regions_below_cut_vertices} trivially and \invref{inv:regions_below_non-cut_vertex} since the layout expands $\prec_H$.
    
    Now for a step, consider a cut vertex~$v$ whose parent block is already processed and let $C_1,\ldots,C_k$ be the child blocks of $v$ in $T$.
    By the assumptions of the \lcnamecref{lem:H_partition_to_stack_layout}, for each~$i=1,\ldots,k$ the intersection $G[v] \cap N(G[C_i - v])$ consists of a single edge $e_i \in E(G[v])$, the edges $e_1,\ldots,e_k$ are covered with at most~$p$ directed paths $Q_1,\ldots,Q_p$ in~$G[v]$, and for each edge $e \in E(G[v])$ we have $e = e_i$ for at most~$t$ indices $i \in \{1,\ldots,k\}$.
    Since all of $C_1,\ldots,C_k$ are in $\calB_\text{even}$ or all in $\calB_\text{odd}$, we have a set of $2spt$ stacks at our disposal.
    Reserve~$p$ pairwise disjoint sets of stacks of size~$2st$, one per path.
    For a fixed path $Q$, group these $2st$ stacks further into~$t$ disjoint subsets of size~$2s$, such that for each subset each edge $e$ of $Q$ corresponds to at most one index $i \in \{1,\ldots,k\}$ with $e = e_i$.
    It is left to show that we can find a $2s$-stack layout for a fixed path~$Q = (x_0, \ldots, x_\ell)$ of length~$\ell$ and a set of blocks~$X_1,\ldots,X_\ell$, where $G[v] \cap N(G[X_j-v])$ is exactly the edge~$x_{j-1}x_j$ with $ j = 1, \dots, \ell $ (having one block per edge is the most difficult case, having less only makes it easier), refer to \cref{fig:h_partition_to_stack_layout_simplified}.
    Note that each child $ C_i $ is dealt with: It is either in $\calB_\text{even}$ or $\calB_\text{odd}$, it is attached to one of the $ p $ paths, and it is contained in one of the $ t $ subsets for the path within which $ C_i $ does not share the edge $ e_i $ with some other child. 
    That is, each of the $ X_j$'s takes care of at most one $ C_i $.

    \begin{figure}
        \centering
        \includegraphics[scale=1.30909090909, page=2]{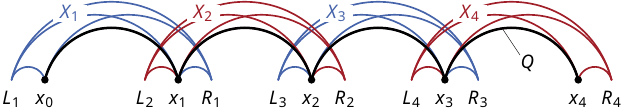}
        \caption{%
            The situation of \cref{lem:H_partition_to_stack_layout} (see \cref{fig:H_partition_to_stack_layout}, in particular the three blocks $ C_1, C_2, C_3 $ attached to the same edge in $ G[v] $) simplified in two ways:
            We only consider one directed path $ Q \subseteq G[v] $ and we have exactly one block $ X_j $ at each edge $ x_{j-1} x_j $, instead of at most~$ t $ (which is paid for by a factor of $ t $) or none (for which the respective $ X_j $ can simply be ignored).
            That is, each block $ X_j $ represents at most one child block $ C_i $ of $ v $, where $ e_i = x_{j-1} x_j $.
            Note that the colors match those used in the layout in \cref{fig:h_partition_stacks}.
        }
        \label{fig:h_partition_to_stack_layout_simplified}
    \end{figure}
    
    We are now in the situation illustrated in \cref{fig:h_partition_to_stack_layout_simplified} and by the assumptions of the \lcnamecref{lem:H_partition_to_stack_layout}, for each block $X_j$ the corresponding $G[X_j]$ admits an $s$-stack layout $\prec_j$ expanding some vertex ordering of $H$.
    For every vertex $w$ in $X_j$ the vertices of $G[w]$ appear consecutively in $\prec_j$, and this holds in particular for $w = v$.
    We remove from $\prec_j$ all vertices in $G[v]$ except for $x_{j-1}$ and $x_j$.
    As $G[v] $ and $ G[X_j - v] $ are connected only via $x_{j-1}$ and $x_j$, this does not remove any edge of $G$ corresponding to an edge in $X_j - v$.

    \begin{cfigure}
        \centering
        \includegraphics[scale=1.30909090909]{figures/h_partition_stacks.pdf}
        \caption{
            Incorporating the $s$-stack layouts of $G[X_1],\ldots,G[X_4]$ (compare \cref{fig:h_partition_to_stack_layout_simplified}) into the interval containing $G[v]$, where $G[v] \cap N(G[X_i-v]) = \{x_{i-1},x_i\}$ and $Q = (x_0,\ldots,x_4)$ is a directed path in $G[v]$.
            Both, the red and the blue edges represent $s$-stack layouts of respective $G[X_i]$.
        }
        \label{fig:h_partition_stacks}
    \end{cfigure}
    
    Let~$L_j$ and~$R_j$ denote the sets of vertices that are to the left of~$x_{j-1}$ respectively to the right of~$x_j$ in the remaining $\prec_j$.
    Recall that there are no vertices between $ x_{j-1} $ and $ x_j $ as the vertices of $ G[v] $ appear consecutively in $ \prec_j $.
    We now insert $L_j$ immediately before $x_{j-1}$ and $R_j$ immediately after $x_j$ in the vertex ordering of the already processed graph.
    See also \cref{fig:h_partition_stacks} for a visualization.
    We observe that the edges of~$X_j$ and~$X_{j'}$ do not cross for~$|j - j'| > 1$.
    Thus, $2s$ stacks indeed suffice for all~$X_1,\ldots,X_\ell$ by reusing the same $s$ stacks for even indices $j$ and another $s$ stacks for odd indices $j$.

    To finish the proof, recall that we maintain \invref{inv:regions_below_cut_vertices} and \invref{inv:regions_below_non-cut_vertex} after each step for the already processed subgraph of $G$.
    For this, note that by processing its child blocks, $ v $ is turned from a non-cut vertex to a cut vertex, and thus we may assume \invref{inv:regions_below_non-cut_vertex} but need to satisfy \invref{inv:regions_below_cut_vertices} for~$ v $.
    As, by \invref{inv:regions_below_non-cut_vertex}, the vertices of $G[v]$ were consecutive in the vertex ordering before, it follows that the vertices of $G[H_v]$ are consecutive in the vertex ordering after those insertions, i.e., \invref{inv:regions_below_cut_vertices} is fulfilled.
    Moreover, since each vertex ordering $\prec_j$ expands some vertex ordering of $H$, we conclude that for each newly processed vertex $w$ of $H$, its corresponding subgraph $G[w]$ lies consecutively inside $L_j$ or inside $R_j$ for some $j$.
    Thus also \invref{inv:regions_below_non-cut_vertex} is fulfilled.
%
\end{proof}

\begin{remark}
     The assumption in \cref{lem:H_partition_to_stack_layout} that the edges $e_1,\ldots,e_k$ can be covered with at most $p$ directed paths in $G[v]$ can be easily relaxed.
     Indeed, if $B$ is the parent block of $v$ in the block-cut tree of $H$, it is enough that the $s$-stack layout of $G[v]$ as part of the $s$-stack layout of $G[B]$ in the statement of the \lcnamecref{lem:bounded_H_partition_to_stack_layout} contains a set of at most $m$ non-crossing matchings that cover $e_1,\ldots,e_k$.
     Then the above proof can be easily adapted to show $\sn(G) \leq 2smt$.
     Having at most $p$ directed paths is clearly enough to have $m \leq 2p$ non-crossing matchings in \textit{every} topological vertex ordering of $G[v]$.
\end{remark}

\subsection{Directed \texorpdfstring{$\boldsymbol{H}$}{H}-Partitions of Acyclic Outerplanar Graphs}
\label{sec:construct_H_partition}

The goal of this section is to construct directed \Hpartition{}s $\calP$ for every outerplanar DAG $G$, such that we can apply \cref{lem:bounded_H_partition_to_stack_layout} from the previous section to reason that $\sn(G)$ is bounded by a constant.
In particular, we aim for a block-monotone~$H$, with each part $P \in \calP$ inducing a relatively simple subgraph in $G$, as well as small cut cover numbers.
Instead of bounding the cut cover number globally, it suffices to have it constant for each block of~$H$ locally.
Formally, if $B$ is a block of $H$, then $\calP_B = \{P \in \calP \mid v_P \in B\}$ is a directed $B$-partition of the corresponding subgraph $G[B]$ of $G$, and we want that the cut cover number of $\calP_B$ is constant.
This is enough to apply~\cref{lem:H_partition_to_stack_layout} from the previous section.
These properties of the following lemma are illustrated in \cref{fig:partition_properties} and then combined in \cref{fig:partition}.

\begin{lemma}
    \label{lem:construct_H_partition}
    Let~$G$ be a maximal outerplanar DAG with fixed base edge and~$T$ be its (rooted) construction tree.
    Then~$G$ admits a directed \Hpartition~$\calP$ with the following properties:
    \begin{enumerate}[(P1)]
        \item\label{prop:parts_maximal_transitive_subgraphs}
        $\calP$ contains exactly one part $P$ for each monotone vertex~$u$ of~$G$ and $P$ contains exactly the vertices of the transitive subgraph\footnote{See \cref{sec:preliminaries} to recall the definition.} below~$u$.
        \item\label{prop:two_paths}
        For each $P \in \calP$ the graph $G[P]$ contains two directed paths $Q_1,Q_2$ such that every vertex of $G - P$ that is stacked onto an edge in $G[P]$ is stacked onto an edge of $Q_1$ or $Q_2$.
        \item\label{prop:H_block_monotone}
        $H$ is a block-monotone outerplanar DAG.
        \item\label{prop:block_cut_cover_number_bounded}
        For each block~$B$ of~$H$ the directed $B$-partition~$\calP_B$ of $G[B]$ has cut cover number at most~$4$.
    \end{enumerate}
\end{lemma}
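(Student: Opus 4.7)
The plan is to define the partition \calP directly from the vertex labels of the construction tree $T$: for each \lab{M}-labeled vertex $u$, let $P_u$ consist of $u$ together with every descendant $w$ in $T$ whose $u$-$w$-path uses only \lab{T}-labeled vertices (aside from possibly $u$). Equivalently, every vertex of $G$ joins the part of its nearest \lab{M}-ancestor in $T$. This makes $\{P_u\}$ a partition of $V(G)$ satisfying \propref{prop:parts_maximal_transitive_subgraphs} by construction. To verify that \calP is a valid directed \Hpartition, I will check that all edges of $G$ between any fixed pair of distinct parts carry the same orientation. Such inter-part edges arise either because a monotone vertex $u'$ is stacked onto a parent edge whose endpoints may lie in different parts, or because a transitive descendant of $u'$ has a neighbor outside $P_{u'}$; a short case analysis on whether the relevant monotone vertex is a left or right child shows that all such edges orient consistently towards (respectively away from) $P_{u'}$, so the quotient $H = G/\calP$ is well-defined.

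For \propref{prop:two_paths}, I will describe $G[v_{P_u}]$ inductively by processing the transitive descendants of $u$ in the order in which they appear in $T$. Each new transitive descendant either (i) is stacked on an edge whose other endpoint lies outside $P_u$, contributing a single pendant-type edge, or (ii) is stacked on an edge with both endpoints already in $P_u$, contributing two edges that complete a triangle within $P_u$. Tracking the extremal vertex on each of the two sides of $u$ through this process, $G[v_{P_u}]$ admits a description as a double fan: two directed paths $Q_1$ (leaving $u$ towards $u$'s parent $v$) and $Q_2$ (leaving $u$ towards $u$'s parent $w$) together form the outer boundary of the fan and contain every vertex of $P_u$, while all remaining edges of $G[v_{P_u}]$ are interior chords from $u$ to intermediate vertices of $Q_1 \cup Q_2$. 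The key observation is that every such interior chord has both of its adjacent faces in the outerplanar drawing of $G$ occupied by triangles lying inside $P_u$, and hence cannot be stacked on by any external vertex; consequently whenever a vertex of $G - P_u$ stacks onto an edge of $G[v_{P_u}]$, that edge necessarily lies on $Q_1 \cup Q_2$.

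For \propref{prop:H_block_monotone} and \propref{prop:block_cut_cover_number_bounded}, I first observe that $H$ is outerplanar, being a minor of the outerplanar graph $G$. To establish block-monotonicity, I characterize each block of $H$ as corresponding to a maximal chain of parts tied together by parent edges that span part boundaries (which happens precisely when a monotone vertex is stacked onto an edge on $Q_1 \cup Q_2$ of its parent part). Within each such block, I designate the earliest such shared edge as a base edge, and apply an inductive argument along the chain using the fan description from \propref{prop:two_paths} to show that every subsequent part-vertex attaches as a left or right child in the 2-tree of the block, giving monotonicity. For \propref{prop:block_cut_cover_number_bounded}, all edges leaving a part $P_u$ and ending in another part of the same block of $H$ are incident to a small set of canonical vertices of $P_u$, namely the apex $u$ together with the endpoints of $Q_1$ and $Q_2$ where $u$'s parent edges attach, yielding the bound of $4$. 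The main obstacle I foresee is \propref{prop:H_block_monotone}: pinning down exactly which edges of $G$ contribute to the same block of $H$, and verifying that each such block admits a base edge relative to which every other part-vertex is monotone, will require a careful analysis combining the outerplanarity of $G$ with the fan structure from \propref{prop:two_paths}.
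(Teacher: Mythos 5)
Your overall plan follows the paper's: same partition into transitive subgraphs below monotone vertices, same left/right-child case analysis for consistency of edge orientations, and essentially the same reason (each edge of a maximal outerplanar $2$-tree receives at most one child, so replaced path edges are saturated) behind \propref{prop:two_paths}. However, there are genuine gaps. The most concrete one is in your argument for \propref{prop:block_cut_cover_number_bounded}: you propose a cover consisting of ``canonical vertices \emph{of} $P_u$'' (the apex $u$ and the endpoints of $Q_1,Q_2$). This cannot work. Let $vw$ be the parent edge of $u$ and consider transitive vertices $z_1,z_2,\dots$ stacked successively onto the first edge of the path towards $v$ (i.e.\ $z_1$ onto $vu$, $z_2$ onto $vz_1$, and so on). All $z_i$ lie in $P(u)$ and each is joined to $v$ by an edge with exactly one endpoint in $P(u)$; these edges pairwise share only the vertex $v \notin P(u)$, so any bounded cover must contain $v$ itself. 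The definition of cut cover number permits cover vertices outside the part, and the correct cover is $\{v, w, q_1, q_2\}$ where $q_1,q_2$ are the neighbors of $v,w$ on the two maintained paths: $v,w$ catch all edges whose child-endpoint is in $P(u)$, while $q_1,q_2$ catch all edges of the block whose parent-endpoint is in $P(u)$ (monotone children creating new parts of the \emph{same} block can only be stacked onto the first path edges $vq_1$ or $wq_2$; stackings onto edges interior to $P(u)$ spawn bridges, i.e.\ new blocks, which is why the bound is stated per block and needs the separate two-vertex cover $\{a,b\}$ for a child block created by stacking onto an edge $ab$ — a case your argument does not treat).

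Two further points. First, \propref{prop:H_block_monotone} requires $H$ to be a DAG, and quotients of DAGs by connected parts can contain directed cycles (contract a perfect matching of a $6$-cycle with alternating orientations); you never address acyclicity. It follows here because each new part $P(u)$ is created from a monotone vertex $u$, so $v_{P(u)}$ is a source or sink at creation time and lies on no directed cycle. Second, you flag block-monotonicity itself as an unresolved obstacle; the paper resolves it by following the construction sequence: stacking a monotone $u$ onto an edge whose endpoints lie in two different parts extends an existing block of $H$ by a monotone vertex (relative to that block's existing base edge), while stacking onto an edge with both endpoints in one part creates a bridge, hence a fresh block with that bridge as its base edge. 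Your ``double fan with apex $u$'' description of $G[v_P]$ is also inaccurate (interior chords arising from transitive stackings deeper in the path need not be incident to $u$), though this does not affect the validity of \propref{prop:two_paths}.
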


\begin{figure}
    \centering
    \includegraphics[scale=1.30909090909, page=1]{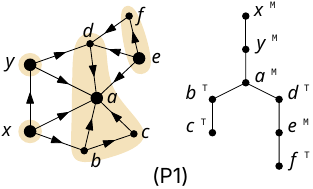}
    \hspace{2cm}
    \includegraphics[scale=1.30909090909, page=2]{partition_properties}

    \vspace{\baselineskip}

    \includegraphics[scale=1.30909090909, page=3]{partition_properties}
    \hspace{2cm}
    \includegraphics[scale=1.30909090909, page=4]{partition_properties}
    \caption{%
        The properties guaranteed by \cref{lem:construct_H_partition}. 
        \Cref{fig:partition} shows a larger example combing all properties.
        \propref{prop:parts_maximal_transitive_subgraphs} 
        Each monotone vertex (thick) has its own part (orange) including all transitive vertices until the next monotone vertex, i.e., the transitive subgraph below it (right). 
        %
        \propref{prop:two_paths}
        Two paths $ Q_1^+,Q_2^+ $ cover all outer edges of part $ P $.
        Adding another transitive vertex $ z $ extends the path (dashed). 
        Removing $ v $ and $ w $ yields $ Q_1 $ and $ Q_2 $.
        \propref{prop:H_block_monotone}
        Each block of the quotient (right) consists of a base edge (thick) and monotone vertices corresponding to the monotone vertices in $ G $ (left).
        \propref{prop:block_cut_cover_number_bounded}
        Inside each block, the cut cover number is bounded, i.e., for each part $ P $ there are four vertices (circled red) that cover all edges leaving $ P $.
    }
    \label{fig:partition_properties}
\end{figure}

\begin{proof}
    We divide the proof into five parts.
    First we define~$\calP$ according to property~\propref{prop:parts_maximal_transitive_subgraphs}.
    Second, we analyze the subgraph of $G$ induced by the vertices of each part $P \in \calP$ and thereby verify property~\propref{prop:two_paths}.
    Third, we show that $\calP$ is indeed a directed \Hpartition, i.e., the quotient~$H := G/\calP$ is a well-defined directed graph.
    Then we show~\propref{prop:H_block_monotone}, i.e., that~$H$ is block-monotone.
    At last, we prove that the cut cover number for each block of $H$ is at most~$4$, verifying property~\propref{prop:block_cut_cover_number_bounded}.

    \begin{subproof}[Construction of~$\boldsymbol{\calP}$]
    For each monotone vertex $u$ of $G$, let $P(u)$ be the set of vertices in the transitive subgraph below $u$.
    By definition, we have $u \in P(u)$ and that $u$ is the only monotone vertex in $P(u)$.
    Moreover recall that the root of the construction tree $T$ is a monotone vertex and thus every transitive vertex of $G$ lies in $P(u)$ for some monotone $u$.
    Hence $\calP = \{ P(u) \mid u \text{ monotone vertex of $G$}\}$ is indeed a partition of $V(G)$ satisfying~\propref{prop:parts_maximal_transitive_subgraphs}.
	\end{subproof}

    For the remainder of the proof it will be convenient to consider a construction sequence of $G$ in which every monotone vertex $u$ is immediately followed by the vertices in the transitive subgraph below $u$.
    We consider such a sequence vertex by vertex and argue about intermediate versions of $G$ and $\calP$ (and thus of $H = G/\calP$).
    At the beginning we have only the base edge $xy$ directed from $x$ to $y$, which are both labeled \lab{M}.
    Then $\calP = \{P(x), P(y)\}$ with $P(x) = \{x\}$ and $P(y) = \{y\}$.
    In each subsequent step, a vertex $u$ is stacked onto an edge $vw$, where $ v,w $ are the parents of $ u $.
    Recall, however, that the \emph{parent of $u$ in the construction tree $T$} is the younger among $v,w$.\footnote{We remark that, unless stated otherwise, parent/child refers to the construction sequence.}
    If~$u$ is a transitive vertex (i.e., labeled~\lab{T} in~$T$), then~$u$ is simply added to the part $P$ in $\calP$ that contains the parent of $u$ in $T$.
    Otherwise, if~$u$ is a monotone vertex (i.e., labeled~\lab{M} in~$T$), then a new part $P(u)$ is added to~$\calP$ consisting of only~$u$.
    \Cref{fig:partition,fig:partition_properties} show an example of the resulting partition.
    Note that this iterative process indeed eventually results in the partition $\calP = \{P(u) \mid u \text{ monotone vertex of $G$}\}$ of $G$ as described above.

    \begin{figure}
        \centering
        \includegraphics[scale=1.30909090909]{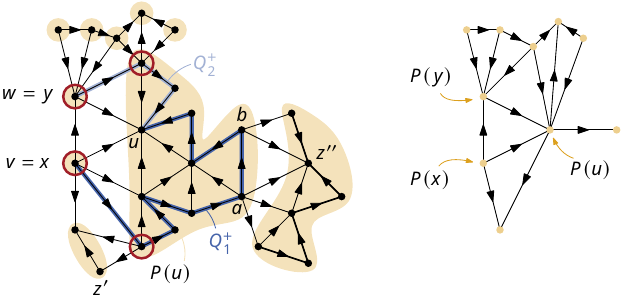}
        \caption{%
            Full example for the proof of \cref{lem:construct_H_partition}.
            For better readability, each step is shown separately in \cref{fig:partition_properties}.
            Left: An outerplanar DAG with base edge $ xy $ and a directed \Hpartition $\calP$ (orange). The vertex $ u $ is stacked onto the edge $ vw $ and is the unique monotone vertex in $ P(u) $. The paths $ Q_1^+ $ (darkblue from $ v $ to $ u $) and $ Q_2^+ $ (lightblue from $ w $ to $ u $) for $ P(u) $ are drawn thick. The vertices marked with red circles certify that the cut cover number within the large block (all parts except for the rightmost) of the part $ P(u) $ is at most 4. In a later step, $ z'' $ is stacked onto $ ab $, introducing a bridge that is a new block in  $H $. 
            Right: The quotient $ H = G/\calP $, where each part of $ \calP $ is contracted to a single vertex. The vertices resulting from the parts $ P(x), P(y), $ and $ P(u) $ are labeled.%
        }
        \label{fig:partition}
    \end{figure}
    
    \begin{subproof}[$\boldsymbol{\calP}$ fulfills~\propref{prop:two_paths}{}]
    We shall argue that each part $P \in \calP$ fulfills~\propref{prop:two_paths} by showing this for the moment when $P$ is created in the construction sequence and maintaining~\propref{prop:two_paths} for $P$ whenever $P$ is augmented with a new vertex thereafter.
    So fix a monotone vertex $u$ and consider the step in the construction sequence when the part $P = P(u)$ is created.
    As $P$ is created with just the single vertex $u$, property~\propref{prop:two_paths} holds vacuously immediately after its creation.
    Moreover, if $u = x$, i.e., the tail of the base edge $ xy $, then $P = P(u)$ will never be augmented with a new vertex and \propref{prop:two_paths} holds throughout.
    To show that~\propref{prop:two_paths} is maintained for $P = P(u)$ for $u \neq x$ with each step of the construction sequence, we maintain two directed paths $Q_1^+,Q_2^+$ associated to $P$ and containing all edges of the subgraph induced by $ P $ onto which a vertex can be stacked (see again \cref{fig:partition,fig:partition_properties}).
    If $u = y$, we initialize both $Q^+_1 $ and $Q^+_2 $ with $ (x, y) $.
    If $u \neq x,y$ and the monotone vertex $u$ is stacked onto edge~$vw$, we initialize $Q_1^+ := (v,u)$ and $Q_2^+ := (w,u)$.
    In any case, we have a $u$-$v$-path $Q_1^+$ and a $u$-$w$-path $Q_2^+$ (taking $ v = w = x $ if $ u = y $), where in $G$ both paths are either consistently oriented towards $u$ or both consistently oriented away from $u$.
    It holds that the next vertex in the construction sequence with a parent in $P$ is stacked onto an edge of $Q_1^+$ or an edge of $Q_2^+$.
    Moreover, the subpaths $Q_1 := Q_1^+ - \{v\}$ and~$Q_2 := Q_2^+ - \{w\}$ are contained in~$G[P]$ and trivially fulfill~\propref{prop:two_paths} for $P = P(u)$.
    
    Now consider the next step with a transitive vertex~$z$ added to $P = P(u)$.
    (If the next vertex is monotone, then $P$ is final and we are done.)
    Then $z$ is stacked onto an edge~$ab$ of~$Q_1^+$ or~$Q_2^+$ (gray/dashed in \cref{fig:partition_properties}).
    We replace edge~$ab$ in~$Q_1^+$ (or~$Q_2^+$) by the path~$(a,z,b)$.
    This way, $Q_1^+$ and $Q_2^+$ are still oriented in $G$ consistently towards $u$ or away from $u$.
    Since~$G$ is outerplanar, no future vertex is also stacked onto $ab$.
    Hence, the next vertex with a parent in $P$ is again stacked onto an edge of $Q_1^+$ or an edge of $Q_2^+$.
    It follows that the subpaths $Q_1 := Q_1^+ - \{v\}$ and~$Q_2 := Q_2^+ - \{w\}$ are contained in~$G[P]$ and again fulfill~\propref{prop:two_paths} for $P = P(u)$.
	\end{subproof}

   \begin{subproof}[$\boldsymbol{\calP}$ is a directed $\boldsymbol{H}$-partition]
    We show that in each step of the construction sequence~$\calP$ is indeed a directed \Hpartition, i.e., all edges between any two parts $P(u_1),P(u_2) \in \calP$ are oriented in the same direction.
    Assume without loss of generality that monotone vertex $u_1$ appears before monotone vertex $u_2$ in the construction sequence.
    Thus at the time $u_2$ is stacked onto some edge, $P(u_1)$ is already in $\calP$ and 
    contains at least one parent of $ u_2 $.
    If $u_2$ is a right (left) child, we show that all edges are oriented from $P(u_1)$ to $P(u_2)$, respectively from $P(u_2)$ to $P(u_1)$.
    This clearly holds immediately after the construction step for $u_2$.

    By symmetry, assume that $u_2$ is a right child.
    Then the paths $Q_1^+$ and $Q_2^+$ for $P(u_2)$ are both in $G$ directed towards $u_2$ and away from the parents of $u_2$.
    Now consider the next step with a transitive vertex $z$ added to $P(u_2)$ such that $z$ also has one of its parents in $P(u_1)$.
    Then $z$ is stacked onto the first edge of $Q_1^+$ or the first edge of $Q_2^+$.
    As $z$ is transitive, the edge between $P(u_1)$ and $z$ is oriented towards $z \in P(u_2)$, as desired.
    \end{subproof}

    \begin{subproof}[$\boldsymbol{H}$ is a block-monotone DAG]
    To show that~\propref{prop:H_block_monotone} holds, we start by observing that~$H = G/\calP$ is a DAG.
    By the previous paragraph, $H$ is a well-defined directed graph, but we need to argue that it is acyclic\footnote{
        The quotient of an outerplanar DAG obtained by contracting some edges might be cyclic.
        For example consider a $6$-cycle with alternating edge orientations.
        Contracting a maximum matching results in a directed $3$-cycle.
    }.
    Since all edges between two parts of~$\calP$ are oriented consistently, it is sufficient to verify that~$H$ remains acyclic whenever a new part $P$ is created.
    So we have $P = P(u)$ for some monotone vertex $u$ and either both edges incident to~$u$ are oriented towards~$u$ or both edges are oriented away from~$u$.
    Thus,~$v_P$ is not on any cycle in~$H$.

    Further observe that each part in $\calP$ induces a connected subgraph of $G$.
    Thus, the quotient graph $H = G/\calP$ is a minor of $G$.
    In particular, $H$ is outerplanar and every block of $H$ is maximal outerplanar.
    We refer to \cref{fig:partition_properties,fig:partition} for examples of quotients.

    It remains to show for~\propref{prop:H_block_monotone} that~$H$ is block-monotone.
    Again we do this using the construction sequence of~$G$.
    Initially, there is only the base edge~$xy$, and we have $\calP = \{P(x),P(y)\}$ with $P(x) = \{x\}$ and $P(y) = \{y\}$.
    We declare the currently only edge of~$H$ to be the base edge for the currently only block of~$H$.

    Now assume again that vertex~$u$ is stacked onto an edge~$vw$ in $G$.
    If~$u$ is a transitive vertex, nothing needs to be done as~$H$ does not change.
    If~$u$ is a monotone vertex and~$v,w$ are in different parts in $\calP$, then some block $B$ of~$H$ is extended by a new vertex $v_P$ for $P = P(u)$.
    Since the stacking is monotone, the enlarged block $B$ of~$H$ remains monotone with respect to the same base edge.
    Lastly, if~$u$ is a monotone vertex and~$v,w$ are in the same part $P' \in \calP$, then~$H$ gets extended by a bridge~$e$ between $v_{P'}$ and the new vertex $v_P$ for $P=P(u)$.
    This bridge~$e$ forms a new block of~$H$, which is monotone with base edge $e$.
    \end{subproof}

	\begin{subproof}[Each block has cut cover number at most 4]
    Let~$B$ be a block of~$H$ and~$G[B] \subseteq G$ be the corresponding subgraph of~$G$.
    Further, let~$P$ be a part of~$\calP_B$, i.e., such that $v_P$ lies in $B$.
    The goal is to show that part $P$ has cut cover number at most~$4$ within the block $B$, i.e., to find a set $S$ of at most four vertices in $G[B]$ that cover all edges of~$G[B]$ with one endpoint in~$P$ and the other endpoint in~$B$ but in another part of~$\calP_B$.
    Recall that the endpoints of every edge in $G$ are in a parent/child relation by construction of the outerplanar graph, where each child has two parents independent of the edge directions.
    In particular, we shall consider edges $e \in E(G[B])$ whose parent-endpoint lies in $P$ while the child-endpoint does not, and edges whose child-endpoint is in $P$ while the parent-endpoint is not.

    \begin{figure}
        \begin{subfigure}{0.4\linewidth}
            \centering
            \includegraphics[scale=1.30909090909, page=1]{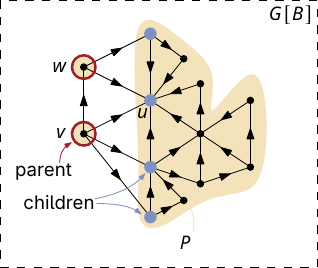}
            \caption{All edges with the child-endpoint in $ P $ but the parent-endpoint not are incident to $ v $ or $ w $.\\~\\~\\~}
            \label{fig:partition_cut-cover-number_case-one}
        \end{subfigure}
        \hfill
        \begin{subfigure}{0.55\linewidth}
            \centering
            \includegraphics[scale=1.30909090909, page=2]{partition_cut-cover-number}
            \caption{All edges with the parent-endpoint in $ P $ but the child-endpoint not are incident to $ q_1 $ or $ q_2 $. Note that stacking onto any edge with both endpoints in $ P $ yields a new block and thus is not considered here, while stacking onto an edge with both endpoints not in $ P $ creates only edges that do not intersect $ P $.}
            \label{fig:partition_cut-cover-number_case-two}
        \end{subfigure}

        \vspace{\baselineskip}

        \begin{subfigure}{\linewidth}
            \centering
            \includegraphics[scale=1.30909090909, page=3]{partition_cut-cover-number}
            \caption{Left: If $G[B]$ does not contain $ v, w $, we choose $ S = \{a,b\}$. Again, recall that stacking onto any edge in part $ P $ opens a new block. Right: The quotient with block $ B $}
            \label{fig:partition_cut-cover-number_case-three}
        \end{subfigure}
        \caption{Bounding the cut cover number in the proof of \cref{lem:construct_H_partition}. Each subfigure refines \cref{fig:partition} to show the details in the different cases of the proof. Recall that the cut cover number is bounded for each block separately.}
        \label{fig:partition_cut-cover-number}
    \end{figure}
        
    Let~$u$ be the monotone vertex of $G$ with $P = P(u)$.
    Clearly, if $u=x$, then $P(u) = \{u\}$ and it is enough to take $S = \{u\}$.
    If $u \neq x$, let $Q_1^+$ and $Q_2^+$ be the paths associated to $P(u)$ as constructed above.
    By symmetry assume that $Q_1^+$ is a directed $v$-to-$u$-path and $Q_2^+$ is a directed $w$-to-$u$-path in $G$ (either because $u=y$ or $u \neq y$ is some right child).
    Let us first assume that $G[B]$ contains $v$ and $w$.
    In this case let $S$ be the set consisting of $v$, $w$, the neighbor $ q_1 $ of $v$ in $Q_1^+$, and the neighbor $ q_2 $ of $w$ in $Q_2^+$, see \cref{fig:partition,fig:partition_cut-cover-number}.
    (Several of these vertices might coincide.)
    Then $|S| \leq 4$ and we claim that $S$ covers every edge in $G[B]$ with exactly one endpoint in $P$.
    Indeed, every edge in $G[B]$ with child-endpoint in $P$ but parent-endpoint outside $ P $ has as parent-endpoint either $v$ or $w$ (\cref{fig:partition_cut-cover-number_case-one}).
    So let $e$ be an edge with parent-endpoint $ p $ in $P$ but child-endpoint $z'$ in $G[B] - P$, see \cref{fig:partition_cut-cover-number_case-two}.
    Hence the part $P'$ in $\calP_B$ containing $z'$ was created later in the construction sequence than $P$.
    To show that $ p $ is either $ q_1 $ or $ q_2 $ and therefore in $ S $, we make two observations:
    First, recall that every monotone vertex that is stacked onto an edge with both endpoints in $ P $ creates a new block.
    And second, observe that immediately after finishing $ P $ in the construction sequence, the only two edges with exactly one endpoint in $ P $ are $ v q_1 $ and $ w q_2 $, i.e., the first edge in each of $ Q_1^+ $ and $ Q_2^+ $.
    Together, it follows that whenever a new monotone vertex is introduced in the construction sequence, either it has both parents in $ P $ and creates a new block, or it has exactly one parent in $ P $, namely $ q_1 $ or $ q_2 $, or it has no parent in $ P $.
    Since we only consider the block $ B $, we conclude that all vertices introduced after finishing $ P $ that have at least one parent in $ P $ actually have exactly one parent in $ P $, namely $ q_1 $ or $ q_2 $.

    Finally, assume that $G[B]$ does not contain $v$ and $w$ (e.g., \cref{fig:partition_cut-cover-number_case-three} or if $ B $ is the bridge in \cref{fig:partition}).
    Then $v_P$ is a cut vertex of $H$ and $B$ is a child block of $v_P$ in the block-cut tree of $ H $. 
    In particular, $ P $ is the first part in $ G[B] $ according to the construction sequence, and thus every edge in $G[B]$ with exactly one endpoint in $P$ has the parent-endpoint in $P$ and the child-endpoint in $G[B]-P$.
    Recall that block $B$ was initialized as a bridge when a monotone vertex $z''$ was stacked onto an edge $ab$ of $Q_1$ or $Q_2$.
    In this case let $S = \{a,b\}$.
    Then $|S| = 2$ and $S$ clearly contains the parent-endpoint of every edge in $G[B]$ with exactly one vertex in~$P$.
    Since we only need to bound the cut cover number of the partition restricted to some block $B$, this finishes~\propref{prop:block_cut_cover_number_bounded}.
    \end{subproof}

    We conclude that \calP is indeed a directed $H$-partition satisfying all four properties, which concludes the proof.
\end{proof}

\subsection{A Constant Upper Bound}

By Property \propref{prop:parts_maximal_transitive_subgraphs} of \cref{lem:construct_H_partition}, each part of the constructed directed \Hpartition induces the transitive subgraph below a monotone vertex.
Bounding the stack number of these subgraphs is the last missing piece before we prove \cref{thm:outerplanar_bounded}.

\begin{lemma}
    \label{lem:maximal_transitive_subgraph}
    Let $ G $ be a maximal outerplanar DAG with a fixed base edge and let $ T $ be its (rooted) construction tree.
    Then the transitive subgraph below every monotone vertex $ w $ admits a 1-stack layout.
\end{lemma}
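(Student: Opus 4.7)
The plan is to read off the structure of the transitive subgraph $H_w$ below the monotone vertex $w$ from the paths $Q_1^+, Q_2^+$ maintained in the proof of property~\propref{prop:two_paths} of \cref{lem:construct_H_partition}. There, each part $P(w)$ carries two directed paths, each having $w$ as one endpoint and some vertex outside $H_w$ as the other, and oriented consistently: both towards $w$ when $w$ is a right child (a sink in its triangle) and both away from $w$ when $w$ is a left child. Let $Q_1, Q_2$ denote the subpaths obtained by deleting the outside endpoints; then $V(Q_1) \cup V(Q_2) = V(H_w)$ and $V(Q_1) \cap V(Q_2) = \{w\}$.

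The first step is to show that every edge of $H_w$ has both endpoints in $V(Q_1)$ or both in $V(Q_2)$. Any vertex $z \in V(Q_1) \setminus \{w\}$ joins $Q_1^+$ at the moment of its stacking, and every $G$-neighbor of $z$ is created either when $z$ itself is stacked or when some later transitive vertex is stacked on an edge incident to $z$; in both cases the new neighbor is placed on $Q_1^+$ as well. Monotone children of $z$ start a new part of $\calP$ and therefore do not belong to $H_w$. So all $H_w$-neighbors of $z$ lie in $V(Q_1)$, and symmetrically for $V(Q_2)$.

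The main step is a laminarity statement: within $V(Q_i)$, the edges of $H_w$ form a laminar family of intervals with respect to the natural path order on $V(Q_i)$. I plan to trace the refinement history of $Q_i^+$: every time a transitive $z$ is stacked on an edge $ab$ of $Q_i^+$, the edge $ab$ is replaced by the two edges $az$ and $zb$, whose intervals partition the interval of $ab$ in the final positions. All edges that have ever appeared in $Q_i^+$ are therefore the nodes of a binary tree of nested intervals; any two of them are nested or meet in at most a single point, and the edges of $H_w$ inside $V(Q_i)$ are exactly the subset whose both endpoints lie in $V(H_w)$. Making this laminarity precise (and dealing with the corner case $w = y$, where both $Q_i^+$ share the outside endpoint $x$) is what I expect to be the most delicate bookkeeping.

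Given the above, WLOG assume $w$ is a sink and write $Q_1 = (u_1, \ldots, u_p, w)$ and $Q_2 = (v_1, \ldots, v_q, w)$. I propose the vertex ordering $u_1 \prec u_2 \prec \cdots \prec u_p \prec v_1 \prec \cdots \prec v_q \prec w$. It is topological because every edge of $H_w$ was, at some point of the construction, a directed edge of some $Q_i^+$ oriented towards $w$. For the non-crossing property: edges with both endpoints in $V(Q_1) \setminus \{w\}$ nest inside the first block, edges with both endpoints in $V(Q_2) \setminus \{w\}$ nest inside the second block, edges from $V(Q_1)$ to $w$ span from the first block to $w$ and nest every edge of the second block, and any two edges sharing the endpoint $w$ trivially do not cross. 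The source case follows by reversing the ordering.
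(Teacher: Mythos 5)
Your proposal is correct and amounts to essentially the same argument as the paper's: the paper splits the transitive subgraph below $w$ into the two transitive outerplanar DAGs grown from the edges $uw$ and $vw$, orders each along the outer face (its unique topological ordering), and concatenates them with $w$ at the junction --- which is exactly the ordering $u_1 \prec \cdots \prec u_p \prec v_1 \prec \cdots \prec v_q \prec w$ you obtain from the final states of $Q_1^+$ and $Q_2^+$. The only difference is that you re-derive the non-crossing property by hand via the laminar-interval/subdivision argument, where the paper simply invokes the well-known fact that the outer-face ordering of an outerplanar graph is a $1$-stack layout.
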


\begin{sidefigure}
    \centering
    \includegraphics[scale=1.30909090909]{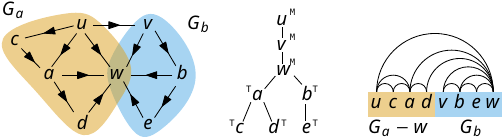}
    \caption{An outerplanar DAG with its construction tree and a 1-stack layout of the transitive subgraph below $ w $}
    \label{fig:transitive_1-stack}
\end{sidefigure}

\begin{proof}
    Let $ uv $ denote the parent edge of $ w $.
    By symmetry, we assume that $ w $ is a right child.
    First recall that $ w $ has at most two children $ a $ and $ b $ in $ T $, corresponding to the children of $ uw $, respectively $ vw $, in $ G $.
    Let $ G_a $ ($ G_b$) denote the subgraph of $ G $ induced by $ u $ ($ v $), $ w $, and the transitive subgraph below $ a $ ($ b $), see \cref{fig:transitive_1-stack}.
    Note that the union of $ G_a $ and $ G_b $ contains the transitive subgraph below $ w $.
    Now observe that $ G_a $ and $ G_b $ are transitive outerplanar DAGs, i.e., obtained from $ uw $, respectively $ vw $, by repeatedly stacking transitive children.
    Therefore, they have a unique topological ordering which coincides with the ordering of the vertices around the outer face, starting with the tail of their base edge and ending with its head.
    It is well known that one stack suffices for outerplanar graphs with this vertex ordering.
    A 1-stack layout for $ G_a \cup G_b $ is now obtained by concatenating the layouts of $ G_a - w $ and $ G_b $, which in particular gives a 1-stack layout for the transitive subgraph below~$ w $.
\end{proof}

\medskip

Finally, we are ready to prove the first main result of this paper.
This includes verifying the premises of \cref{lem:H_partition_to_stack_layout}, which we restate here for convenience.

\HPartitionToStackLayout*

\outerplanarBounded*

\begin{proof}
    Without loss of generality we assume that~$G$ is a maximal outerplanar DAG.
    This is justified as the stack number is monotone under taking subgraphs and because a non-maximal outerplanar DAG~$G$ can easily be extended into a maximal one:
    Add (undirected) edges to~$G$ as long as the underlying undirected graph remains outerplanar.
    Then take any topological ordering~$\prec$ of~$G$ and orient each added edge from its left endpoint in~$\prec$ to its right endpoint in~$\prec$.

    Fix a base edge for $G$ and hence the corresponding construction tree.
    By \cref{lem:construct_H_partition} there is a directed \Hpartition~$\calP$ of~$G$ satisfying properties \propref{prop:parts_maximal_transitive_subgraphs}--\propref{prop:block_cut_cover_number_bounded}.
    In particular, by property~\propref{prop:H_block_monotone} the DAG~$H$ is block-monotone.
    Thus by \cref{cor:block_monotone_bounded} there is an $h$-stack layout~$\prec_H$ of~$H$ with~$h \leq 258$.
    Further, by property~\propref{prop:parts_maximal_transitive_subgraphs}, every part of~$\calP$ induces a transitive subgraph below some monotone vertex of~$G$ and as such admits a $1$-stack layout by \cref{lem:maximal_transitive_subgraph}.

    Now we seek to apply \cref{lem:H_partition_to_stack_layout}, for which we have to check its premises:
    \begin{itemize}
        \item By property~\propref{prop:block_cut_cover_number_bounded}, for every block $B$ of $H$ the directed $B$-partition $\calP_B$ of $G[B]$ has cut cover number at most~$w = 4$.
        Thus, by \cref{lem:bounded_H_partition_to_stack_layout}, graph~$G[B]$ admits an $H$-expanding stack layout using at most $s \leq 3wh + 1 = 3 \cdot 4 \cdot 258 + 1 = 3097$ stacks.

        \item By property~\propref{prop:two_paths}, in each part $P \in \calP$ there are $p = 2$ directed paths $Q_1,Q_2$ in $G[P]$ such that all vertices stacked onto edges of $G[P]$ are stacked onto an edge of $Q_1$ or $Q_2$.
        As~$G$ is outerplanar, we additionally get that at most~$t = 1$ vertex is stacked onto each of those edges.
    \end{itemize}
    Seeing all premises fulfilled, \cref{lem:H_partition_to_stack_layout} yields that the stack number of~$G$ is at most $\sn(G) \leq 4spt \leq 4 \cdot 3097 \cdot 2 \cdot 1 = 24776$.
\end{proof}

\section{Directed Acyclic 2-Trees have Unbounded Stack Number}
\label{sec:2trees}

We construct a directed acyclic $2$-tree~$G$ with arbitrarily large twist number (hence arbitrarily large stack number) in every topological vertex ordering~$\prec$.
Somewhat surprisingly, we first consider rainbows, which can be seen as the counterpart to twists and are defined as follows.
A \emph{$k$-rainbow}, $k \geq 1$, is a set of~$k$ edges that pairwise nest with respect to~$\prec$.
While in general, vertex orderings with small stack number (hence small twist number) are allowed to have arbitrarily large rainbows, we first argue that there is a very large rainbow in~$\prec$ for our constructed $2$-tree~$G$, and then use that rainbow as a lever to slowly find larger and larger twists in~$\prec$.

We start with a straightforward auxiliary lemma.
For this consider a triangle with vertices $ u \prec v \prec w $ with vertex ordering $ \prec $.
Then we call $ u $ the \emph{left vertex}, $ v $ the \emph{middle vertex}, and $ w $ the right vertex.
We further call a set of pairwise vertex-disjoint triangles \emph{well-interleaved} with respect to some vertex ordering if we first have all left vertices, then all middle vertices, and finally all right vertices.
Note that the ordering of the vertices within each group is not determined.

For the proof, we use the Erd\H{o}s-Szekeres theorem which states that every sequence of length $ p \cdot q $ consisting of pairwise distinct integers contains a monotonically increasing sequence of length $ p $ or a monotonically decreasing sequence of length $ q $.

\begin{lemma}
    \label{lem:triangles}
    If $ k^3 $ triangles are well-interleaved with respect to a vertex ordering $ \prec $, then there is a $ k $-twist.
\end{lemma}

\begin{proof}
    Consider well-interleaved triangles $ T_i = (u_i, v_i, w_i) $ for $ i = 1, \dots, k^3 $.
    Without loss of generality we have $ u_1 \prec \dots \prec u_{k^3} \prec v_1, \dots, v_{k^3} \prec w_1, \dots, w_{k^3} $, that is, only the ordering within the $ v $-vertices and within the $ w $-vertices is unknown.
    Among the $ v $-vertices, the Erd\H{o}s-Szekeres theorem with $ p = k $ and $ q = k^2 $ yields an increasing sequence of $ k $ indices, i.e., a $ k $-twist between $ u $- and $ v $-vertices with which we are done, or a decreasing sequence of $ k^2 $ indices with which we continue.
    From now on, we only consider these $ k^2 $ indices.
    Again, by Erd\H{o}s-Szekeres, there either is an increasing sequence of length $ k $ among the $ k^2 $ considered $ w $-vertices yielding a $ k $-twist with the corresponding $ u $-vertices.
    Or there is a decreasing sequence of length $ k $ giving a $ k $-twist between the $ v $- and $ w $-vertices.
\end{proof}

\twotreesUnbounded*

We remark that we actually prove a slightly stronger statement, namely that the twist number is at least~$ k $, which in turn is a lower bound on the stack number.

\begin{proof}
    Let~$k \geq 1$ be fixed.
    Below we construct a $2$-tree~$G$ with twist number~$\tn(G) \geq k$.
    The proof is split into two parts.
    First we construct the $2$-tree~$G$ before proving that every topological vertex ordering contains a $k$-twist.

    \begin{subproof}[Construction of~$\boldsymbol{G}$.]
    We define the desired $2$-tree~$G$ via a sequence of $2$-trees $G_0 \subset G_1 \subset \cdots \subset G_k$ with $G_k = G$.
    For each $t = 0, \ldots, k$ we shall have a matching $E_t \subset E(G_t)$ such that in~$G_t$ no vertex is stacked onto any edge in~$E_t$, and $E_0, \ldots, E_k$ are pairwise disjoint.

    We start with~$G_0$ being a single edge~$ab$ oriented from~$a$ to~$b$, and $E_0 = \{ab\}$.
    Having defined~$G_t$ and~$E_t$ for some $0 \leq t < k$, we define~$G_{t+1}$ and~$E_{t+1}$ as follows:
    We use each edge~$ab \in E_t$ (directed from~$a$ to~$b$) as the base edge for a particular $2$-tree that we denote by~$T(ab)$ and that is constructed as follows:
    Let~$N$ be a large enough integer
    (to be specified below).
    \begin{itemize}
        \item Add a sequence $b^1, \ldots, b^N$ of vertices, where~$b^j$ is stacked as a right child onto the edge $ab^{j-1}$ (putting $b^0 = b$).
        \item Add a sequence $a^1, \ldots, a^N$ of vertices, where~$a^j$ is stacked as a left child onto the edge~$ab^j$.
        \item Denote by~$E(ab)$ the matching $E(ab) = \{a^jb^j
        \mid j \in \{1, \ldots, N\}\}$.
    \end{itemize}
    See \cref{fig:2-tree-construction} for an illustration.
    \begin{sidefigure}
        \centering%
        \includegraphics[scale=1.30909090909]{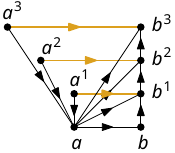}
        \caption{Construction of $ T(ab) $ with the edge set $E(ab)$ (orange)}
        \label{fig:2-tree-construction}
    \end{sidefigure}
    Observe that~$T(ab)$ involves no transitive stackings, that at most two vertices are stacked onto each edge of~$T(ab)$, and that in~$T(ab)$ no vertex is stacked onto an edge in~$E(ab)$.
    Further observe that in every vertex ordering of~$T(ab)$ the vertices $b^1, \ldots, b^N$ come (actually in that order) to the right of~$a$ and~$b$, while the vertices $a^1, \ldots, a^N$ come (not necessarily in that order) to the left of~$a$ and~$b$.
    In particular,~$a$ and~$b$ are consecutive in every vertex ordering of~$T(ab)$.

    Now let~$G_{t+1}$ be the $2$-tree obtained from~$G_t$ by adding the $2$-tree~$T(ab)$ onto all edges $ab \in E_t$.
    Further let~$E_{t+1}$ be the union of the matchings~$E(ab)$ for all edges~$ab \in E_t$.
    Apart from the exact value of~$N$, this completes the definition of~$G_{t+1}$ on the basis of~$G_t$, and hence the definition of $G = G_k$.

    For our proof below to work, we require that~$N$ is an enormous (but constant) number in terms of~$k$.
    We set~$N = r_1k$, where~$r_1, \ldots, r_k$ is a sequence of integers defined recursively by
    \begin{align*}
        r_k &= 1 \qquad \text{and} \\
        r_t &= 2 \cdot k^3 (2 k^3)^{1 + r_{t+1} k} \qquad \text{for $t = k-1, \ldots, 1$}
        \text{.}\qedhere 
    \end{align*}
	\end{subproof}

	\begin{subproof}[Twist number of~$\boldsymbol{G}$.]
    Let $ \prec $ be an arbitrary vertex ordering of $ G $.
    We give an inductive proof of a slightly stronger statement than the existence of a $ k $-twist, for which we need the following definition.
    For positive integers $ r, t $ we call a matching a \emph{$ t $-twist with an $ r $-thick edge} if it is obtained from a $ t $-twist by replacing one edge by an $ r $-rainbow.
    In particular, the $ r $-rainbow is vertex-disjoint from the remaining $ (t-1) $-twist and each rainbow-edge crosses each twist-edge.
    Our long-term goal is to find such a $ t $-twist with a thick edge in $ G_t $. 
    However, we may also find a $ k $-twist along the way, in which case we can stop immediately.
    Thus, throughout the proof we always assume that we do not find a $ k $-twist.
    Under this assumption, we now give our stronger statement that we prove by induction:
    For every $ t = 1, \dots, k $, the subgraph $ G_t $ of $ G $ contains a $ t $-twist with an $ r_t $-thick edge, where the $ r_t $-rainbow consists of edges in $E_t$.
    We start with a huge rainbow, which rapidly decreases while increasing the size of the twist by 1 in each step until we obtain a $ k $-twist for $ t = k $.
    \end{subproof}
    
    For $ t = 1 $, a 1-twist with an $ r_1 $-thick edge is simply an $ r_1 $-rainbow.
    Recall that~$G_1 = T(ab)$ for the only edge~$ab \in E_0 = E(G_0)$.
    As mentioned above, we have $a^1, \ldots, a^N \prec a \prec b \prec b^1 \prec b^2 \prec \cdots \prec b^N$.
    By the Erd\H{o}s-Szekeres theorem, the ordering of~$N$ such $a$-vertices according to~$\prec$ contains a $k$-element subsequence with monotonically increasing indices or an $N/k$-element subsequence with monotonically decreasing indices.
    The former case gives a $k$-twist from the corresponding $a$-vertices to the $b$-vertices, so we can stop.
    Otherwise, the latter case gives a rainbow formed by $N/k = r_1$ edges of $ E_1 $, as desired.
    
    Now, for $ t \geq 1 $, assume that we have a $ t $-twist $ T $ with an $ r_t $-thick edge, where $ R \subseteq E_t $ denotes the $ r_t $-rainbow.
    We aim to find an entirely new rainbow $ R' \subseteq E_{t+1} $ of size $ r_{t+1} $ and an edge $ e' $ crossing all edges of $ R' $, where all these edges start in the region spanned by the starting points of $ R $ and end in the region spanned by the endpoints of $ R $.
    Together with the $ t - 1 $ edges of $ T - R $, this forms a $ (t + 1) $-twist with an $ r_{t+1} $-thick edge, see \cref{fig:thick_edge}.
    \begin{figure}
        \centering
        \includegraphics[scale=1.30909090909]{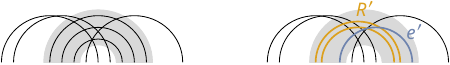}
        \caption{%
            Left: A 4-twist with a 3-thick edge. 
            Right: A 2-rainbow $R'$ crossed by an edge $ e'$ resulting in a 5-twist with a 2-thick edge.
        }
        \label{fig:thick_edge}
    \end{figure}
    For this, recall that in the construction of $ G $, the 2-tree $ T(ab) $ is added to each edge $ ab \in R \subseteq E_t $.
    To find $ R' $ and $ e' $, we follow two steps: 
    First, we analyze the edges $ ab \in R $ and their right children. 
    Here, we either find a large subset of edges having their children far to the right (\cref{fig:right_children}~left), which yields well-interleaved triangles and thus a $ k $-twist by \cref{lem:triangles}.
    Or we have the other extreme that many edges of $ R $ have their children close to their right endpoint (\cref{fig:right_children}~right).
    In the second step, we find $ R' $ and $ e' $ in the 2-tree $ T(ab) $ of such an edge~$ab$.
    
    \begin{cfigure}
        \centering
        \includegraphics[scale=1.30909090909, page=1]{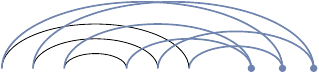}
        \hspace{4em}
        \includegraphics[scale=1.30909090909, page=3]{right_children}
        \caption{%
            Left: Three consecutive edges of the rainbow $ R $ with their first right children (blue vertices) skipping sufficiently many vertices of $ R $ such that three well-interleaved triangles are formed.
            (Note that for simplification of the proof, \emph{all} children are required to skip at least $ k^3 $ vertices of $ R $, although this is not necessary and not shown here for better readability, e.g., the child of the outermost edge.)
            Right: Two edges of the form $ ab \in S_s \subseteq R $ with the right children of $ T(ab)$ immediately following~$ b $.
        }
        \label{fig:right_children}
    \end{cfigure}
    
    \begin{cfigure}
        \centering
        \includegraphics[scale=1.30909090909, page=2]{right_children}
        \caption{%
            Five consecutive edges of $ R $ with their first right children, edges to the left parent are omitted for readability.
            Edges to children skipping more than one vertex of $ R $ are drawn dashed. 
            Each block of two consecutive edges of $ R $ contains an edge whose child skips at most one vertex. 
            The chosen subrainbow $ S_1 $ is highlighted thick.
        }
        \label{fig:find_good_right_children}
    \end{cfigure}
    
    For the first part, consider an edge $ ab \in R $ and the first right child $ b^1 $ in $ T(ab) $, and count the number of vertices of $ R $ that are \emph{skipped}, i.e., the number of endpoints of edges of $ R $ between $ b $ and $ b^1 $.
    If there are $ k^3 $ (along the nesting order) consecutive edges $ ab $ in $ R $ such that each $ b^1 $ skips at least $ k^3 $ vertices of $ R $, then we obtain $ k^3 $ well-interleaved triangles (\cref{fig:right_children}~left) and therefore a $ k $-twist by \cref{lem:triangles}.
    As we are done in this case, we assume the other case in which we have a set $ S \subseteq R $ with the following two properties:
    First, $ S $ consists of at least $ r_t / k^3 $ edges whose first right child skips less than $ k^3 $ vertices of $ R $.
    And second, among each $ k^3 $ consecutive edges of $ R $ at least one belongs to $ S $.
    It follows that among the edges of $ S $, there is a subset $ S_1 \subseteq S $ of $ r_t / (2k^3) $ edges $ ab $ such that the right child $ b^1 $ does not skip any vertex of $ S_1 $; choose one in each block of $2k^3$ consecutive edges in $ R $ (\cref{fig:find_good_right_children}).
    
    Next, consider the second right children, i.e., the vertex $ b^2 $ for edges $ ab \in S_1 $.
    With the same argument as above, we have one of the two extremes in \cref{fig:right_children}:
    Either $ k^3 $ consecutive edges of $ S_2 $ have their second right children far to the right forming well-interleaved triangles.
    Or there is a subrainbow $ S_2 \subseteq S_1 \subseteq S \subseteq R $ of size  $ r_t / (2k^3)^2 $ such that for every edge $ ab \in S_2 $, its second right child $ b^2 $ does not skip any vertex of $ S_2 $.
    Repeating the argument $ s $ times, we obtain a subrainbow of $ S_s \subseteq \dots \subseteq S_2 \subseteq S_1 \subseteq R $ such that each edge $ ab $ has many right children $ b^1 , b^2, \dots, b^s $ not skipping any vertex of $ S_s $.
    Note that the size of the subrainbow shrinks by a factor of $ 2 k^3 $ in each step.
    Since $ r_t = k^3 (2k^3)^s $ for $ s = 1 + r_{t+1} k $, after $ s $ steps we are left with a set $ S_s \subseteq R $ of $ k^3 $ edges of the form $ ab $ such that $ b^1 \prec \dots \prec b^s $ immediately follow $ b $, i.e., no other vertex of $ S_s $ or the considered right children is between $ b $ and $ b^s $, which concludes the first part.
    
    \begin{figure}
        \centering
        \includegraphics[scale=1.30909090909]{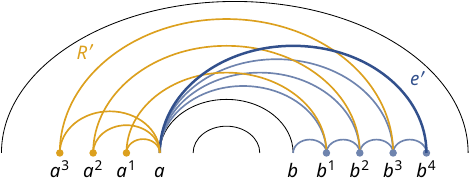}
        \caption{The new rainbow $ R' $ and the edge $ e' = a b^s $ crossing $ R' $}
        \label{fig:new_rainbow}
    \end{figure}
    
    The second part considers the left children of the 2-trees $ T(ab) $ to find $ R' $ and $ e' $.
    Observe that if all $ k^3 $ edges of $ S_s \subseteq R $ have a left child outside the region spanned by $ R $, then these children together with their parents form $ k^3 $ well-interleaved triangles.
    As this yields a $ k $-twist by \cref{lem:triangles}, we may assume that there is an edge $ ab \in S_s $ with the children $ a^1, \dots, a^{s-1} $ below the outermost edge of $ R $, where again $ s = 1 + r_{t+1} k $.
    Among these children, we find the starting points of the new rainbow $ R' $ using the Erd\H{o}s-Szekeres theorem:
    Either we find a sequence of $ k $ increasing indices, then the respective $ a^i b^i $-edges form a $ k $-twist and we are done.
    Or we find a sequence of $ r_{t+1} $ decreasing indices, then the respective $ a^i b^i $-edges form our desired $ r_{t+1} $-rainbow $ R' $.
    Finally, we choose $ e' = a b^s $ as an edge that crosses all edges of $ R' $, see \cref{fig:new_rainbow}.
    Combing the $ (t-1) $-twist $ T - R $ with the edge $ e' $ and the rainbow $ R' \subseteq E_{t+1} $, and recalling that the 2-tree $ T(ab) $ including $ R' $ and $ e'$ is below the outermost edge of $ R $, we obtain a $ (t+1) $-twist with an $ r_{t+1} $-thick edge.
\end{proof}

\section{Conclusion and Open Problems}
We proved that outerplanar DAGs have bounded stack number (\cref{thm:outerplanar_bounded}) and that monotone $2$-trees with at most two vertices stacked on every edge have unbounded stack number (\cref{thm:2tree_unbounded}).
In both cases we solved long-standing open problems or conjectures.
In doing so we got pretty close to pinpointing the boundary between bounded and unbounded stack number of directed acyclic $2$-trees, hence DAGs of treewidth~$2$.
However, several interesting questions worth to be considered remain open.

Our first open problem is about the exact upper bound for the stack number of outerplanar DAGs.
We are certain that the number of~$24776$ stacks required by our approach can be lowered.
The best known lower bound is an outerplanar DAG (that is even upward planar) presented by N\"{o}llenburg and Pupyrev~\cite{Noellenburg2021_DAGsWithConstantStackNumber} that requires four stacks.

\begin{open}
    What is the largest stack number of outerplanar DAGs exactly?
\end{open}

Our family of $2$-trees constructed to prove \cref{thm:2tree_unbounded} is not upward planar.
This motivates the following open problem to further narrow the gap between bounded and unbounded stack number.

\begin{open}
    Is the stack number of upward planar $2$-trees bounded?
\end{open}

In fact, this is just a special case of the same question for general upward planar graphs.
Here the best lower bound is an upward planar graph requiring five stacks~
compared to an $O((n \log n)^{2/3})$ upper bound, where $n$ is the number of vertices~\cite{Jungeblut2022_SublinearUpperBound}.

\begin{open}
    Is the stack number of upward planar graphs bounded?
\end{open}

\printbibliography
\end{document}